\newcounter{nalg}[section] 
\renewcommand{\thenalg}{\thesection .\arabic{nalg}} 
\newtheorem{ex}{Example}[section]
\newtheorem{prop}[ex]{Proposition}
\newtheorem{lem}[ex]{Lemma}
\newtheorem{thm}[ex]{Theorem}
\numberwithin{equation}{section}
\theoremstyle{definition}
\newtheorem{definition}[ex]{Definition}
\DeclareMathOperator*{\BigExpt}{\raisebox{-2.5pt}{\mbox{\LARGE $\mathbb{E}$}}}
\DeclarePairedDelimiter\ceil{\lceil}{\rceil}
\DeclarePairedDelimiter\floor{\lfloor}{\rfloor}
\renewcommand{\thenalg}{\thesection .\arabic{nalg}} 
\newtheorem{alg}[ex]{Algorithm}
\newtheorem{corollary}[ex]{Corollary}
\begin{document}

\title{Improved Lower Bounds for Kissing Numbers in Dimensions 25 through 31}
\author{Kenz Kallal, Tomoka Kan, and Eric Wang}
\date{\today}
\subjclass[2010]{52C17 (Primary), 05B40 (Secondary)}
\maketitle
\begin{abstract}
The best previous lower bounds for kissing numbers in dimensions 25--31 were constructed using a set $S$ with $|S| = 480$ of minimal vectors of the Leech Lattice, $\Lambda_{24}$, such that $\langle x, y \rangle \leq 1$ for any distinct $x, y \in S$. Then, a probabilistic argument based on applying automorphisms of $\Lambda_{24}$ gives more disjoint sets $S_i$ of minimal vectors of $\Lambda_{24}$ with the same property. Cohn, Jiao, Kumar, and Torquato proved that these subsets give kissing configurations in dimensions 25--31 of given size linear in the sizes of the subsets. We achieve $|S| = 488$ by applying simulated annealing. We also improve the aforementioned probabilistic argument in the general case. Finally, we greedily construct even larger $S_i$'s given our $S$ of size $488$, giving increased lower bounds on kissing numbers in $\mathbb{R}^{25}$ through $\mathbb{R}^{31}$.
\end{abstract}
\section{Introduction}\label{intro}
\subsection{Kissing Numbers}
A \emph{kissing configuration} in $\mathbb{R}^n$ is a set of non-overlapping unit spheres externally tangent to the unit sphere $S^{n-1}$. The \emph{kissing number} in $\mathbb{R}^n$ is the maximal size of kissing configurations in $\mathbb{R}^n$. The kissing number is known only for dimensions 1, 2, 3, 4, 8 and 24 (the bold figures in Table~\ref{kissing}).
\begin{table}[h]
\centering
\begin{tabular}{c c | c c}
Dimension & Kissing Number & Dimension & Kissing Number \\
\hline
1 & \textbf{2} & 13 & 1154\\
2 & \textbf{6} & 14 & 1606\\
3 & \textbf{12} & 15 & 2564\\
4 & \textbf{24} & 16 & 4320\\
5 & 40 & 17 & 5346\\
6 & 72 & 18 & 7398\\
7 & 126 & 19 & 10668\\
8 & \textbf{240} & 20 & 17400\\
9 & 306 & 21 & 27720\\
10 & 500 & 22 & 49896\\
11 & 582 & 23 & 93150\\
12 & 840 & 24 & \textbf{196560}\\
\end{tabular}
\caption{The best lower bounds known for kissing numbers in dimensions 1--24.}
\label{kissing}
\end{table}
In dimensions 25 to 31, the best lower bounds previously known for kissing numbers were proved in \cite{Cohn}. We improve these bounds by using computer programs to maximize the size of an initial set of minimal vectors of the Leech lattice, and to construct subsequent mutually disjoint sets of minimal vectors. We also improve \cite{Cohn}'s probabilistic argument for the construction of these subsets, which improves the lower bounds, despite being surpassed by the computer-aided construction in specific cases. We summarize our results in Table~\ref{newresults}.
\begin{table}
\centering
\begin{tabular} {c | c  c  c}
Dimension  & New Bounds & Previous Bounds (2011)\\
\hline
25 & 197048 & 197040\\
26 & 198512 & 198480\\
27 & 199976 & 199912\\
28 & 204368 & 204188\\
29 & 208272 & 207930\\
30 & 219984 & 219008\\
31 & 232874 & 230872\\
\end{tabular}
\caption{New kissing number lower bounds in dimensions 25--31.}
\label{newresults}
\vspace{-8mm}
\end{table}
Although we set new records for the lower bounds, our constructions are not optimal. However, we introduce techniques which may help improve this approach further. 

Next, we introduce some basic concepts and the main setup in \cite{Cohn} to help explain our new results in Sections~\ref{anneal}--\ref{math}. Basic geometric considerations allow us to reformulate the kissing number in $\mathbb{R}^n$ as the maximum number $N$ of points $x_1, \dots ,x_N$ on the unit sphere $S^{n-1}$ such that $\langle x_i, x_j \rangle \leq 1/2$ for $i \neq j$. This reformulation makes it significantly easier to test large kissing configurations.

\subsection{The Kissing Number in 24 Dimensions}
The highest-dimensional Euclidean space in which the kissing number is known is $\mathbb{R}^{24}$, where the centers of the tangent spheres are the 196560 minimal vectors of the Leech lattice $\Lambda_{24}$ (see \cite{Conway}). The optimality of $\Lambda_{24}$ was proven in \cite{Levenstein} and \cite{Sloane}.

Let $\mathcal{C}$ denote the set of minimal vectors of $\Lambda_{24}$. Every element of $\mathcal{C}$ is of the ``shape'' $(\pm 4^2,0^{22})/\sqrt{8}$, $(\pm 2^8, 0^{16})/\sqrt{8}$, or $(\pm 3, \pm 1^{23})/\sqrt{8}$, where $(\pm a^u,\pm b^{w})$ represents vectors which contain $u$ copies of $\pm a$ and $w$ copies of $\pm b$ with signs independent of each other (see \cite{thompson}). Note that not every such vector is in $\mathcal{C}$. There are a total of $|\mathcal{C}| = 196560$ minimal vectors, all with norm $2$.
Let $S$ be a subset of $\mathcal{C}$ which satisfies $\langle x, y \rangle \leq 1$ for all distinct $x, y \in S$. We will see that maximizing $|S|$ will allow us to increase the lower bounds on kissing numbers in $\mathbb{R}^{25}$ through $\mathbb{R}^{31}$.

\subsection{Dimensions past 24}\label{sect1.3}
Given mutually disjoint subsets $S_i$ of $\mathcal{C}$ satisfying the same inner product condition as $S$, we construct configurations in $\mathbb{R}^{24 + d}$ that yield the lower bounds in Table~\ref{cohnsums}.
\begin{table}
\centering
\begin{tabular}{c | c}
Dimension & Lower Bounds on Kissing Number \\
\hline 
25 & $196560 + |S_1|$ \\
26 & $196560 + 2|S_1| + 2|S_2|$ \\
27 & $196560 + 2|S_1| + 2|S_2| + \sum_{i = 3}^5 |S_i|$ \\
28 & $196560 + 2\sum_{i = 1}^8 |S_i|$ \\
29 & $196560 + 2\sum_{i = 1}^8 |S_i| + \sum_{i = 9}^{16} |S_i|$ \\
30 & $196560 + 2\sum_{i = 1}^{24} |S_i|$ \\
31 & $196560 + 2\sum_{i = 1}^{24} |S_i| + \sum_{i = 25}^{51} |S_i|$
\end{tabular}
\caption{Lower bounds on kissing numbers in $\mathbb{R}^n$}
\label{cohnsums}
\vspace{-8mm}
\end{table}
These are a special case of \cite[Theorem~7.5]{Cohn}:
\begin{thm}\label{thm1.2}
Let $S_1, \ldots, S_n$ be mutually disjoint subsets of $\mathcal{C}$ with $\langle x,y \rangle \leq 1$ for all distinct $x,y \in S_i$, where $i = 1,2, \dots, n$. Suppose that we have partitioned a kissing configuration in $S^{d-1}$ into disjoint subsets $T_1, \ldots, T_n$ such that for each $i$, $\langle x, y \rangle \leq -1/2$ for any $x, y \in T_i$. Then the kissing number in $\mathbb{R}^{24 + d}$ is at least \[196560+\sum_{i = 1}^{n} (|T_i| - 1)|S_i|.\]
\end{thm}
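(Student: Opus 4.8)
The plan is to produce an explicit kissing configuration in $\mathbb{R}^{24+d}$ whose size equals the claimed lower bound, using the reformulation that a kissing configuration of size $N$ is a set of $N$ unit vectors with pairwise inner products at most $1/2$. I would work in $\mathbb{R}^{24+d}=\mathbb{R}^{24}\oplus\mathbb{R}^{d}$, placing a rescaled copy of $\mathcal{C}$ in the first factor and the kissing configuration $T_1\sqcup\cdots\sqcup T_n\subseteq S^{d-1}$ in the second. Since the minimal vectors have squared length $4$ and distinct ones satisfy $\langle x,y\rangle\le 2$, the set $\{x/2:x\in\mathcal{C}\}$ already has all pairwise inner products at most $1/2$, and this accounts for the leading term $196560$. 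The new points come from pairing $S_i$ with $T_i$: for scalars $\alpha,\beta>0$ with $4\alpha^{2}+\beta^{2}=1$, I would take the unit vectors $(\alpha s,\beta t)$ for each $i$ and each $(s,t)\in S_i\times T_i$, together with the \emph{remaining} base vectors $(x/2,0)$ for $x\in\mathcal{C}\setminus\bigcup_i S_i$.

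The heart of the argument is choosing $\alpha,\beta$ so that every pairwise inner product stays at most $1/2$, which I would verify by cases (reading the hypothesis on $T_i$ as applying to distinct $x,y$). Base--base pairs give $\langle x,y\rangle/4\le 1/2$. For a retained base vector $(x/2,0)$ and a combined vector $(\alpha s,\beta t)$ one has $x\ne s$ (the indices in $\bigcup_i S_i$ were removed), so $\langle x,s\rangle\le 2$ and the inner product $(\alpha/2)\langle x,s\rangle\le\alpha$, requiring $\alpha\le 1/2$. For two combined vectors with the same $i$: if $s=s'$ and $t\ne t'$ the inner product is $4\alpha^{2}+\beta^{2}\langle t,t'\rangle\le 4\alpha^{2}-\tfrac12\beta^{2}$, forcing $\beta^{2}\ge 1/3$; if $s\ne s'$ and $t=t'$ it is $\alpha^{2}\langle s,s'\rangle+\beta^{2}\le\alpha^{2}+\beta^{2}$, forcing $\alpha^{2}\ge 1/6$. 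Under $4\alpha^{2}+\beta^{2}=1$ these two demands are compatible only at
\[
\alpha=\tfrac{1}{\sqrt{6}},\qquad \beta=\tfrac{1}{\sqrt{3}},
\]
which indeed satisfies $\alpha<1/2$. Finally, for combined vectors from different parts $i\ne j$, the points $t\in T_i$ and $t'\in T_j$ are distinct elements of the kissing configuration, so $\langle t,t'\rangle\le 1/2$, while $s\ne s'$ gives $\langle s,s'\rangle\le 2$; the inner product is then at most $2\alpha^{2}+\tfrac12\beta^{2}$, which equals $1/2$ identically. Thus all constraints hold, several with equality.

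It remains to count and to name the genuine difficulty. Because the $S_i$ are mutually disjoint, deleting the base vectors indexed by $\bigcup_i S_i$ removes exactly $\sum_i|S_i|$ of the $196560$, while the pairings contribute $\sum_i|S_i|\,|T_i|$ new unit vectors; these are all distinct (each is determined by its pair $(s,t)$, and all differ from the base vectors since their last $d$ coordinates are nonzero), giving a total of $196560+\sum_i(|T_i|-1)|S_i|$. The main obstacle is the simultaneous feasibility of the scalar choice: the two partial-overlap cases (same $s$ versus same $t$) push $\alpha$ in opposite directions, and together with the unit-norm equation they leave no slack, pinning $(\alpha,\beta)$ to a single point at which the binding inequalities become equalities. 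One must also observe that retaining the base vectors $(s/2,0)$ for $s\in\bigcup_i S_i$ is impossible—such a vector meets $(\alpha s,\beta t)$ in inner product $2\alpha=2/\sqrt{6}>1/2$—so their removal is forced, and it is precisely this removal that converts the naive count $|T_i|\,|S_i|$ into $(|T_i|-1)|S_i|$.
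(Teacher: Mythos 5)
Your proposal is correct and is essentially the paper's own proof: the paper's sketch exhibits the configuration $\left\{(x,0) : x \in \mathcal{C}\setminus\bigcup_{i=1}^n S_i\right\} \cup \bigcup_{i=1}^n\left\{\left(x\sqrt{2/3},\, y\sqrt{4/3}\right) : x \in S_i,\ y \in T_i\right\}$, which is exactly your set scaled by a factor of $2$ (your $\alpha = 1/\sqrt{6}$, $\beta = 1/\sqrt{3}$), and your case analysis and counting supply the verification the paper leaves as ``we can verify.'' The only case you do not write out---two combined vectors with the same $i$, $s \neq s'$ and $t \neq t'$---is subsumed by your same-$i$ bound, since $\langle t, t'\rangle \leq 1$ always gives $\alpha^2\langle s,s'\rangle + \beta^2\langle t,t'\rangle \leq \alpha^2 + \beta^2 \leq 1/2$, so nothing essential is missing.
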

\begin{proof}[Sketch of proof]
We can verify that the following set of vectors of norm $4$ has pairwise dot product at most 2:
\[\left\{(x, 0) \in \mathbb{R}^{24} \times \mathbb{R}^d \mid x \in \mathcal{C} \backslash \bigcup_{i=1}^n S_i\right\} \cup \bigcup_{i=1}^n\left\{\left(x\sqrt{2/3}, y\sqrt{4/3}\right) \mid x \in S_i, y \in T_i\right\}.\]
\end{proof}
Note that $|T_i| \leq 3$, where equality holds when $T_i$ contains three vectors arranged in an equilateral triangle in the same plane as the origin. 

\cite{Cohn} gives the method indicated in Proposition~\ref{prop1.4} below for obtaining lower bounds on the $|S_i|$'s probabilistically, which we later improve in Section~\ref{math}. Their proof uses the automorphism group of $\Lambda_{24}$, defined below:
\begin{definition}[Automorphism group of $\Lambda_{24}$]\label{defn1.3}
The automorphisms of $\Lambda_{24}$ are the bijections $g : \Lambda_{24} \to \Lambda_{24}$ which preserve distance in $\mathbb{R}^{24}$ and fix the origin. These automorphisms form a group acting on $\Lambda_{24}$, known as the Conway group $\textrm{Co}_0$.  
\end{definition}

It follows from this definition that automorphisms also preserve inner product.


We present \cite[Lemma 7.4]{Cohn} in Proposition~\ref{prop1.4}. The proof of this lemma involves recursively bounding the $|S_i|$'s from Theorem~\ref{thm1.2} by translating $S$ by a random element $g \in \textrm{Co}_0$, and deleting the intersection of $gS$ with $S_1 \cup \dots \cup S_{i-1}$.

By using the bounds on the $|S_i|$'s given in Proposition~\ref{prop1.4}, the bounds in Table~\ref{cohnsums}, and an initial $S$ of size 480, \cite{Cohn} arrived at the numerical lower bounds on kissing numbers displayed in the rightmost column of Table~\ref{newresults}.

\begin{prop}\label{prop1.4}
\cite{Cohn} Let $S_1 = S$. Then for any $n \geq 1$, there exist mutually disjoint subsets $S_2, \ldots, S_n \subset \mathcal{C}$ such that $\langle x, y \rangle \leq 1$ for all $x, y \in S_i$, and 
\[|S_i| \geq |S|\left(1 - \frac{\sum_{j = 1}^{i-1}|S_j|}{|\mathcal{C}|}\right) = |S|\left(1 - \frac{\sum_{j = 1}^{i-1}|S_j|}{196560}\right).\]
Moreover, if $S$ is antipodal, each of the $S_i$'s will also be antipodal.
\end{prop}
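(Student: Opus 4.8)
The plan is to prove Proposition~\ref{prop1.4} by induction on $i$, constructing the sets $S_i$ one at a time using a probabilistic (averaging) argument. We set $S_1 = S$, and suppose that mutually disjoint sets $S_1, \ldots, S_{i-1}$ have already been constructed, each satisfying the inner product condition $\langle x, y \rangle \leq 1$ for distinct elements. Let me think about the key averaging step.

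Let me verify the mechanism. For a random automorphism $g \in \textrm{Co}_0$, the translate $gS$ is again a subset of $\mathcal{C}$ satisfying $\langle x, y \rangle \leq 1$ for distinct $x, y$, since automorphisms preserve inner products (as noted after Definition~\ref{defn1.3}) and map $\mathcal{C}$ to itself. The candidate for $S_i$ is obtained by deleting from $gS$ its intersection with the previously constructed sets, i.e.\ $S_i := gS \setminus (S_1 \cup \cdots \cup S_{i-1})$. This deletion guarantees mutual disjointness, and the inner product condition is inherited by any subset of $gS$, so $S_i$ is a valid set regardless of the choice of $g$. What remains is to show that for some $g$, the set $S_i$ is large enough.

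The main computation is bounding the expected size of $S_i$ over a uniformly random $g \in \textrm{Co}_0$. I would write $|S_i| = |gS| - |gS \cap (S_1 \cup \cdots \cup S_{i-1})| \geq |S| - \sum_{x \in gS} \mathbf{1}[x \in S_1 \cup \cdots \cup S_{i-1}]$, and take expectations. The key fact is that $\textrm{Co}_0$ acts transitively on $\mathcal{C}$, so for a fixed $v \in \mathcal{C}$ and uniformly random $g$, the vector $gv$ is uniformly distributed over $\mathcal{C}$; hence $\Pr[gv \in A] = |A|/|\mathcal{C}|$ for any $A \subseteq \mathcal{C}$. By linearity of expectation, $\BigExpt_g |gS \cap (S_1 \cup \cdots \cup S_{i-1})| = |S| \cdot \frac{|S_1 \cup \cdots \cup S_{i-1}|}{|\mathcal{C}|} = |S| \cdot \frac{\sum_{j=1}^{i-1}|S_j|}{|\mathcal{C}|}$, using the disjointness of the $S_j$. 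Therefore $\BigExpt_g |S_i| \geq |S|\bigl(1 - \frac{\sum_{j=1}^{i-1}|S_j|}{|\mathcal{C}|}\bigr)$, and since the average of $|S_i|$ over all $g$ meets this bound, there exists at least one $g$ for which $|S_i|$ is at least the stated quantity. We fix such a $g$ to define $S_i$, completing the induction.

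The step I expect to require the most care is the transitivity of the $\textrm{Co}_0$-action on $\mathcal{C}$, which is exactly what makes $gv$ uniform on $\mathcal{C}$ and drives the entire averaging estimate; this is a classical property of the Conway group but should be invoked explicitly. The final clause on antipodality follows because any automorphism $g$ commutes with negation (since $g(-x) = -g(x)$ as $g$ is linear and fixes the origin), so if $S$ is antipodal then $gS$ is antipodal; moreover the deletion step removes whole antipodal pairs whenever both the previously constructed sets and $gS$ are antipodal, since $x \in S_1 \cup \cdots \cup S_{i-1}$ implies $-x \in S_1 \cup \cdots \cup S_{i-1}$ by the inductive antipodality hypothesis, so $S_i$ remains antipodal.
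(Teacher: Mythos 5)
Your proposal is correct and follows essentially the same route as the paper's own proof: translate $S$ by a uniformly random $g \in \textrm{Co}_0$, use transitivity of the action on $\mathcal{C}$ together with linearity of expectation to bound $\BigExpt_g |gS \cap (S_1 \cup \cdots \cup S_{i-1})|$ by $|S|\sum_{j=1}^{i-1}|S_j|/|\mathcal{C}|$, pick a $g_i$ achieving at most the average, and set $S_i = g_iS \setminus (S_1 \cup \cdots \cup S_{i-1})$. Your explicit treatment of the antipodality clause (using $g(-x) = -g(x)$ and the inductive antipodality of the union) is a welcome addition, as the paper's proof leaves that part implicit.
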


Starting from an antipodal set $S$ with $|S| = 480$,  \cite{Cohn} applied Theorem~\ref{thm1.2} and Proposition~\ref{prop1.4} to give the lower bounds shown in the rightmost column of Table~\ref{newresults}.

These two results yield two clear methods to increase the lower bounds on kissing numbers in $\mathbb{R}^{25}$ through $\mathbb{R}^{31}$: constructing $S$ of size larger than 480, and constructing $S_i$'s which improve on the probabilistic bounds given by Proposition~\ref{prop1.4}. We improve the former in Section~\ref{anneal}, and the latter in Sections~\ref{bash} and \ref{math}.

\section{Constructing a First Subset}\label{anneal}
We constructed an $S$ with $|S| = 488$ by computer methods outlined below. This set of $488$ vectors can be found in the text file listed in Appendix~\ref{data}.
\subsection{Greedy Approach}
In order to generate their $S$ with $|S| = 480$, \cite{Cohn} used the greedy algorithm outlined in Algorithm~\ref{greedy}.
  \begin{algorithm}
    \caption{Greedy Construction of $S$}\label{greedy}
    \begin{algorithmic}[1]
      \Procedure{Greedy\_S}{$\mathcal{C}$}
      	\State $S \gets \emptyset$
        \ForAll {$v_1$ in $\mathcal{C}$} 
            \If{$\langle v_1, v_2 \rangle \leq 1$ for all $v_2 \in S \setminus \{v_1\}$}
            	\State $S \gets S \cup \{v_1\}$
            \EndIf
        \EndFor
        \State \Return {$S$}
      \EndProcedure
    \end{algorithmic}
  \end{algorithm}
In order to iterate over the elements of $\mathcal{C}$, the algorithm requires a certain ordering of the vectors. The average size of a set $S$ generated from a random order seems to hover around 238.
\vspace{-4mm}
\subsection{Simulated Annealing}
  \begin{algorithm}
    \caption{Simulated Annealing}\label{order}
    \begin{algorithmic}[1]
      \Procedure{Anneal\_S}{$\mathcal{C}$}
      	\State $S \gets \emptyset$
        \For {$1 \leq t \leq t_{max}$}
       	      \State $T \gets \textrm{temp}(t)$
       	      \If {$\textrm{rand}(0, 1) < \frac{1}{2}$ and $\exists$ $v_1 \in \mathcal{C} \setminus S$ such that $\langle v_1, x \rangle \leq 1$  $\forall x \in S$}
              	\State $c \gets $ random element of $\mathcal{C \setminus S}$ satisfying the above property
              	\State $S \gets S \cup \{c\}$
              \Else 
              	\State $s \gets $ random element of $S$
                \State $S^\prime \gets S \setminus \{s\}$
                \State $p \gets \exp( -(\textrm{energy}(S^\prime) - \textrm{energy}(S)) / T)$
                \If {$\textrm{rand(0, 1)} < p$}
              	\State $S \gets S^\prime$
              \EndIf
              \EndIf
        \EndFor
      	\State \Return{$S$}
      \EndProcedure
    \end{algorithmic}
\end{algorithm}
We improved on this approach by using the simulated annealing algorithm shown in Algorithm 2.2, which minimizes the \textit{energy} of the state $S$, whose value depends on an arbitrary energy function. We used the function $\textrm{energy}(S) = -|S|$, but any function of the current state that tends to decrease with $|S|$ could work. (For instance, a function that takes into consideration the number of vectors in $\mathcal{C} \setminus S$ that can still be added to the configuration $S \cup \{v_1\}$.)

The \textit{temperature} $\text{temp}(t)$ describes how likely the algorithm is to move to a less optimal state, and is a function of the number of iterations. In a process called the \textit{cooling schedule}, the algorithm starts at a high temperature and gradually decreases it to 0, where the program becomes exclusively greedy. Again for simplicity's sake, we used a linear cooling schedule, but other functions could also be considered.

The advantage of this algorithm is that it can explore more of the solution space without getting stuck in local extrema. Given a slow enough cooling schedule, the distribution of possible states converges over time to the global maximum for $|S|$. This algorithm led to the current record $|S| = 488$ after starting from an initial set $S$ obtained by calling the greedy algorithm on different orderings of the vectors. Due to resource constraints, we optimized based on the assumption that $S$ is antipodal (meaning that we added and removed vectors in antipodal pairs $\pm x$).

\section{Maximizing Sizes of the Subsets via Explicit Construction}\label{bash}
We also explicitly constructed a sequence of mutually disjoint subsets $S_i$ with pairwise inner product at most 1 using a probabilistic algorithm analogous to the construction of $S$. Using the algorithm developed in \cite{randomgroup}, we generated random elements $g$ of the automorphism group $\textrm{Co}_0$ and repeatedly applied them to $S$. We did this until $gS$ was disjoint from all the previous sets; if we could not find such a $gS$, we chose the largest set that we could find after deleting the overlap with previously constructed sets. After several trials and modifications, this approach yielded 51 mutually disjoint subsets of $\mathcal{C}$ of nonincreasing size, 24 of which have size 488. 
Using these subsets (which can be found as text files in Appendix~\ref{data}), we obtained the new lower bounds from Theorem~\ref{mainresult} for the kissing numbers in 25 through 31 dimensions (the bounds in the second column result from applying Proposition~\ref{prop1.4} using $|S|=488$).

\begin{thm}\label{mainresult}
The kissing numbers in 25 to 31 dimensions are bounded below by the values indicated in Table~\ref{finalbounds}.
\end{thm}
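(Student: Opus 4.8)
The plan is to derive the numerical bounds of Table~\ref{finalbounds} by substituting the subsets constructed in Section~\ref{bash} into the specialized form of Theorem~\ref{thm1.2} recorded in Table~\ref{cohnsums}. First I would relabel the $51$ subsets so that their sizes are nonincreasing, $|S_1| \ge |S_2| \ge \cdots \ge |S_{51}|$. This ordering matters: the rows of Table~\ref{cohnsums} weight some subsets by $2$ and others by $1$, according to whether the corresponding cell $T_i$ of the partitioned kissing configuration in $S^{d-1}$ is an equilateral triangle ($|T_i| = 3$) or a two-point cell ($|T_i| = 2$), and since we are free to pair the $S_i$ with the $T_i$ as we like, assigning the largest subsets to the doubly-weighted cells maximizes each bound.

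Before substituting, I would confirm that the constructed subsets satisfy the hypotheses of Theorem~\ref{thm1.2}: they lie in $\mathcal{C}$, are pairwise disjoint, and each obeys $\langle x, y \rangle \le 1$ for distinct $x, y \in S_i$. The inner-product condition is inherited from $S$ because each $S_i$ is contained in $g_i S$ for some $g_i \in \textrm{Co}_0$, and automorphisms preserve inner products; disjointness is guaranteed by the deletion step $S_i = g_i S \setminus \bigcup_{j < i} S_j$ used to produce them. Concretely this amounts to a finite check on the explicit vectors listed in Appendix~\ref{data}.

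The bounds then follow by direct evaluation. Since the construction yields $24$ subsets of the maximal size $488$, the rows of Table~\ref{cohnsums} that involve only these give, for example, $196560 + |S_1| = 197048$ in dimension $25$ and $196560 + 2\sum_{i=1}^{24} |S_i| = 196560 + 2(24)(488) = 219984$ in dimension $30$; the remaining rows are obtained by reading off $|S_1|, \dots, |S_{51}|$ and plugging in. For the comparison column of Table~\ref{finalbounds}, I would instead apply Proposition~\ref{prop1.4} with $|S| = 488$ to produce the recursive lower bounds on the $|S_i|$ and substitute those (rounded down to integers) into the same rows.

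I expect no serious obstacle at this stage: the genuine difficulty was already absorbed into Section~\ref{bash}, namely producing $51$ mutually disjoint subsets while keeping $24$ of them at size $488$ and certifying their properties. Once those subsets are fixed and ordered by size, Theorem~\ref{mainresult} reduces to arithmetic in the formulas of Table~\ref{cohnsums} together with a routine application of Proposition~\ref{prop1.4}.
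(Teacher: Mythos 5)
Your proposal is correct and follows essentially the same route as the paper: the paper likewise obtains Theorem~\ref{mainresult} by feeding the $51$ explicitly constructed, mutually disjoint subsets from Section~\ref{bash} (of nonincreasing size, $24$ of them of size $488$, with disjointness by the deletion step and the inner-product condition inherited via automorphisms) into the formulas of Table~\ref{cohnsums} from Theorem~\ref{thm1.2}, and fills the comparison column by applying Proposition~\ref{prop1.4} with $|S| = 488$. Your arithmetic spot-checks (e.g.\ $196560 + 488 = 197048$ in dimension $25$ and $196560 + 2(24)(488) = 219984$ in dimension $30$) agree with Table~\ref{finalbounds}, so nothing is missing.
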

\begin{table}
\centering
\makebox[\linewidth]{
    \begin{tabular}{c | c c c}
Dimension  & $|S| = 488$ & $|S|=488$ & Previous bounds  \\
 & Explicit $S_i$'s  &  Proposition~\ref{prop1.4} & (2011) \\
\hline
25 & 197048 & 197048 & 197040\\
26 & 198512 & 198512 & 198480\\
27 & 199976 & 199968 & 199912\\
28 & 204368 & 204312 & 204188\\
29 & 208272 & 208114 & 207930\\
30 & 219984 & 219368 & 219008\\
31 & 232874 & 231412 & 230872\\
\end{tabular}}
\caption{New lower bounds for kissing numbers, based on $|S| = 488$.}
\label{finalbounds}
\vspace{-1cm}
\end{table}

Although these computer-generated subsets of $\mathcal{C}$ generate the current records, they are almost certainly suboptimal. Furthermore, they display no obvious structure or mathematical explanation; we therefore also present in Section~\ref{math} an improvement of \cite{Cohn}'s probabilistic construction of the $S_j$'s from Proposition~\ref{prop1.4}. Even using $|S| = 480$, this construction leads to improved kissing number lower bounds in dimensions 30 and 31 compared to the previous bounds.

\section{Improved Probabilistic Argument}\label{math}
The following method results in better lower bounds than the original probabilistic method used in \cite{Cohn}, although it is surpassed by the explicit constructions described in Section~\ref{bash}. It assumes that $|S| \leq 626$; the results when applied to our new $S$ of size 488 (from Section~\ref{anneal}) can be found in Table~\ref{lowerbounds}.

Recall that $\mathcal{C}$ is the set of 196560 minimal vectors of the Leech lattice $\Lambda_{24}$. Again, let $S_1 =S$, where $S$ is an antipodal subset of $\mathcal{C}$ such that any two elements $x, y \in S$ satisfy $\langle x,y \rangle \leq 1$.

For $j \geq 2$, we let $S_j$ be of the form
\[
S_j = g_jS\setminus [g_jS \cap (S_1\cup\cdots\cup S_{j-1})],
\]
where $g_j$ belongs to the automorphism group $\textrm{Co}_0$ of the Leech lattice. Since $S$ is antipodal, the $S_j$'s are also antipodal.

We recursively construct lower bounds for $|S_{k+1}|$ from previously obtained lower bounds on $|S_{i}|$, where $i \leq k$. Our argument is based on a careful examination of the set of automorphism group elements $g$ for which $|gS\cap(S_1\cup\cdots\cup S_k)|$ is less than its average value over the whole group.  

Using this method, we construct larger $S_j$'s compared to the ones obtained using the original technique in \cite{Cohn}. Using $|S| = 488$ from Section~\ref{anneal}, this leads to improved lower bounds in dimensions 28 to 31, compared to applying Proposition~\ref{prop1.4} to the same $|S|$. It also leads to improved lower bounds in dimensions 30 and 31 using $|S| = 480$ from \cite{Cohn}, compared with the lower bounds given there. However, this argument does not yield lower bounds on kissing numbers as high as those given by the explicit greedy construction from Section~\ref{bash}.

We first provide some definitions:

\begin{definition}\label{defn4.1}
\hspace{1mm}
\begin{enumerate}
\item Let $E_k = \displaystyle{\mathbb{E}_{g \in \textrm{Co}_ 0}}[|gS \cap (S_1 \cup S_2 \cup \cdots S_k)|]$, where $g$ is chosen uniformly at random from $\textrm{Co}_0$.
\item Let $\lceil x \rceil_2$ denote the smallest even integer at least $x$ and  $\lfloor x \rfloor_2$ denote the greatest even integer at most $x$.

\item For any $g \in \textrm{Co}_0$, let $\delta(g) = |gS \cap (S_1 \cup \cdots \cup S_k)| - E_k$.
\item Let $G_+ = \{ g \in \textrm{Co}_ 0 \mid \delta(g) > 0\}$ and $G_- = \{g \in \textrm{Co}_ 0 \mid \delta(g) < 0\}$.
\end{enumerate}
\end{definition}

\noindent It follows from
\[ \sum_{g \in G_+}{\delta(g)} + \sum_{g \in G_-}{\delta(g)} = \sum_{g \in G}{(|gS \cap (S_1 \cup \cdots \cup S_k)| - E_k)} = 0 \]

\noindent that
\[\sum_{g \in G_+}{|\delta(g)|} = \sum_{g \in G_-}{|\delta(g)|}.\]
We can then prove a basic property of $\delta(g)$:

\begin{lem}\label{lem4.2}
One of the following two possibilities holds:
\begin{enumerate}
\item $ \max\limits_{g \in G_-}{|\delta(g)|} = E_k - \floor{E_k}_2$, or  
\item $ \max\limits_{g \in G_-}{|\delta(g)|} \geq E_k - \floor{E_k}_2 + 2$.
\end{enumerate}
\end{lem}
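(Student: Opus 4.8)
The plan is to exploit a single structural fact: because $S$ and each $S_j$ are antipodal, the integer $n(g) := |gS \cap (S_1 \cup \cdots \cup S_k)|$ is always \emph{even}. Granting this, the lemma reduces to the elementary observation that $\max_{g \in G_-}|\delta(g)|$ equals $E_k$ minus an even integer, and even integers are spaced two apart, so this maximum cannot land strictly inside the open interval $(E_k - \lfloor E_k \rfloor_2,\, E_k - \lfloor E_k \rfloor_2 + 2)$.

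First I would establish the parity claim. Each $g \in \textrm{Co}_0$ is an orthogonal transformation fixing the origin, so $g(-x) = -g(x)$ and therefore $gS$ is antipodal whenever $S$ is. A union of antipodal sets is antipodal, and the intersection of two antipodal sets is antipodal (if $v$ lies in both, so does $-v$). Hence $gS \cap (S_1 \cup \cdots \cup S_k)$ is antipodal; since $0 \notin \mathcal{C}$, no minimal vector is its own negative, so this set splits into disjoint pairs $\{v, -v\}$ and $n(g)$ is even.

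Next I would set $m := \min_{g \in G_-} n(g)$ and note that for $g \in G_-$ one has $|\delta(g)| = E_k - n(g)$, a strictly decreasing function of $n(g)$; hence $\max_{g \in G_-}|\delta(g)| = E_k - m$. Every such $n(g)$ is an even integer with $n(g) < E_k$, so $m$ is an even integer satisfying $m \le \lfloor E_k \rfloor_2$. There are then exactly two cases. If $m = \lfloor E_k \rfloor_2$ (which forces $E_k \notin 2\mathbb{Z}$), we land in possibility (1): $\max_{g \in G_-}|\delta(g)| = E_k - \lfloor E_k \rfloor_2$. Otherwise $m$ is an even integer strictly below $\lfloor E_k \rfloor_2$, hence $m \le \lfloor E_k \rfloor_2 - 2$, and $\max_{g \in G_-}|\delta(g)| = E_k - m \ge E_k - \lfloor E_k \rfloor_2 + 2$, which is possibility (2). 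In particular the case $E_k \in 2\mathbb{Z}$ automatically falls into possibility (2), since then $m < E_k = \lfloor E_k \rfloor_2$.

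The only genuinely delicate point — and the step I would be most careful to get right — is the parity argument, specifically that antipodality is preserved under the orthogonal map $g$ and under intersection, and that excluding the zero vector is what guarantees even cardinality; everything after that is the elementary remark that an even integer cannot lie strictly between two consecutive even integers. I would also dispatch the degenerate possibility that $G_-$ is empty, which by the displayed identity $\sum_{g\in G_+}|\delta(g)| = \sum_{g\in G_-}|\delta(g)|$ forces $G_+$ empty as well, so that $\max_{g\in G_-}|\delta(g)|$ is taken over a nonempty set whenever the quantity is meaningful.
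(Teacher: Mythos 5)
Your core argument coincides with the paper's: antipodality of the $S_j$'s and of $gS$ forces $n(g) = |gS \cap (S_1 \cup \cdots \cup S_k)|$ to be an even integer, and the dichotomy then follows from the fact that even integers are spaced two apart, so $\max_{g\in G_-}|\delta(g)| = E_k - \min_{g \in G_-} n(g)$ is either exactly $E_k - \floor{E_k}_2$ or at least $E_k - \floor{E_k}_2 + 2$. Your parity argument (orthogonality of $g$, closure of antipodality under union and intersection, and $0 \notin \mathcal{C}$ ruling out self-paired vectors) is in fact spelled out more carefully than in the paper, and your case analysis on $m = \min_{g\in G_-} n(g)$ is correct.

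The one genuine gap is your treatment of the degenerate case $G_- = \emptyset$. You correctly observe that $G_- = \emptyset$ would force $G_+ = \emptyset$ via the identity $\sum_{g\in G_+}|\delta(g)| = \sum_{g\in G_-}|\delta(g)|$, but you then retreat to saying the maximum is ``taken over a nonempty set whenever the quantity is meaningful'' --- that is, you leave the lemma conditional on $G_- \neq \emptyset$ rather than proving it. This matters: the statement presupposes that the maximum exists, and the downstream results (Lemma~\ref{lem4.9}, Lemma~\ref{lem4.10}, Lemma~\ref{lem4.11}) need actual elements of $G_-$ to construct $S_{k+1}$ and $S_{k+2}$, so a vacuously true lemma would not suffice. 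The paper closes exactly this case with a one-line observation you are missing: the identity element $e \in \textrm{Co}_0$ satisfies $eS = S = S_1$, so $|eS \cap (S_1 \cup \cdots \cup S_k)| = |S|$, while
\[
E_k = |S|\,\frac{\sum_{j=1}^{k}|S_j|}{|\mathcal{C}|} < |S|
\]
(since $\sum_{j=1}^{k}|S_j| \leq 50 \cdot 626 < |\mathcal{C}|$ under the standing hypotheses $k \leq 50$, $|S| \leq 626$). Hence $\delta(e) > 0$, so $e \in G_+$, so $G_+ \neq \emptyset$, and by your own sum identity $G_- \neq \emptyset$ unconditionally. Adding this observation makes your proof complete and essentially identical to the paper's.
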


\begin{proof}Due to the antipodality of $S_1, S_2, \dots, S_k$, we know $g_iv\in(S_1 \cup \cdots \cup S_k)$ if and only if $g_i(-v)\in(S_1 \cup \cdots \cup S_k)$. Therefore, $|g_iS \cap (S_1 \cup \cdots \cup S_k)|$ is an even integer. Also, $|\delta(g)| \geq E_k - \floor{E_k}_2$ for $g \in G_-$ by definition. 

In addition, note that $G_- \neq \emptyset$. This is because if $G_- = \emptyset$, then $G_+ = \emptyset$, since $\sum_{g \in G_+}{|\delta(g)|} = \sum_{g \in G_-}{|\delta(g)|}$. However, consider the identity element $e \in \textrm{Co}_0$. We have $S_1 = S = eS$, so $|eS \cap (S_1 \cup \cdots \cup S_k)| = |S| > E_k$. Therefore, $\delta(e) > 0$ and $e \in G_+$. So $G_- \neq \emptyset$. 

It follows that either $ \max\limits_{g \in G_-}{|\delta(g)|} = E_k - \floor{E_k}_2$ or $ \max\limits_{g \in G_-}{|\delta(g)|} \geq E_k - \floor{E_k}_2 + 2$.
\end{proof}

In order to analyze the implications of the two possibilities in Lemma~\ref{lem4.2}, we reformulate the problem in terms of graph theory:

\begin{definition}\label{defn4.3}
Let $H$ be a graph whose vertices correspond to the elements of $\textrm{Co}_0$. Two vertices of $H$ are adjacent if their corresponding elements $g_i$ and $g_j$ satisfy $g_iS \cap g_jS = \emptyset$.  
\end{definition}

\begin{lem}\label{lem4.4}
The graph $H$ is regular.
\end{lem}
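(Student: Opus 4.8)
The plan is to recognize $H$ as (essentially) a Cayley graph on the group $\textrm{Co}_0$, which is automatically vertex-transitive and hence regular. Concretely, I would fix an arbitrary vertex $g \in \textrm{Co}_0$ and show that its degree, namely the number of $h \in \textrm{Co}_0$ with $gS \cap hS = \emptyset$, does not depend on the choice of $g$.

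First I would exploit the fact that every $g \in \textrm{Co}_0$ acts as a bijection of $\mathcal{C}$: by Definition~\ref{defn1.3} automorphisms preserve distance and fix the origin, hence preserve norm and inner product, so they send minimal vectors to minimal vectors and permute $\mathcal{C}$. Because $g^{-1}$ is a bijection, it preserves emptiness of intersections, so
\[
gS \cap hS = \emptyset \iff g^{-1}(gS) \cap g^{-1}(hS) = \emptyset \iff S \cap (g^{-1}h)S = \emptyset.
\]
Next I would reparametrize the count over $h$ by setting $k = g^{-1}h$. Since left multiplication by $g^{-1}$ is a bijection of $\textrm{Co}_0$, as $h$ ranges over the whole group so does $k$. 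Therefore the degree of $g$ equals $|\{k \in \textrm{Co}_0 : S \cap kS = \emptyset\}|$, an expression with no dependence on $g$. This common value is the degree of every vertex, so $H$ is regular.

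There is essentially no serious obstacle here; the only points requiring care are the two invoked above, namely that each $g$ genuinely permutes $\mathcal{C}$ (so that $gS$ is a well-defined subset of $\mathcal{C}$ of the same cardinality as $S$) and that a bijection preserves whether an intersection is empty. For completeness I would also remark that the connection set $\{k \in \textrm{Co}_0 : S \cap kS = \emptyset\}$ is symmetric, since applying $k$ gives $S \cap k^{-1}S = \emptyset \iff kS \cap S = \emptyset$, and omits the identity because $S \cap S = S \neq \emptyset$; this confirms that $H$ is an honest undirected Cayley graph, although the regularity claim itself needs only the counting argument above.
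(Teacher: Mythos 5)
Your proof is correct and uses essentially the same mechanism as the paper: left translation in $\textrm{Co}_0$ together with the fact that group elements act as bijections preserving empty intersections, which is exactly the paper's observation that multiplication by $g_2g_1^{-1}$ is a graph automorphism of $H$. The only cosmetic difference is that you normalize every vertex's neighborhood to the identity (computing the degree as $|\{k \in \textrm{Co}_0 : S \cap kS = \emptyset\}|$) rather than mapping one arbitrary vertex to another, so no further comparison is needed.
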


\begin{proof}
This follows from the fact that $\textrm{Co}_0$ acts on itself by left multiplication. Consider three elements $g_1$, $g_2$, and $g'$ in $\textrm{Co}_0$. Since $\textrm{Co}_0$'s elements are automorphisms that act injectively on the Leech lattice, $g_1S \cap g'S = \emptyset$ if and only if $g_2S \cap g_2g_1^{-1}g'S = \emptyset$. This is because $g_2g_1^{-1}(g_1S \cap g'S)$ is empty if and only if $(g_1S \cap g'S)$ is empty. Then composition by $g_2g_1^{-1}$ preserves edges and therefore represents a graph automorphism of $H$. It immediately follows that the vertices corresponding to the arbitrarily chosen $g_1$ and $g_2$ have the same degree, so every vertex in $H$ has the same degree.
\end{proof}

\begin{definition}\label{defn4.5}
\hspace{1mm}
\begin{enumerate}
\item Let $P$ be the subgraph of $H$ consisting of the vertices corresponding to the elements in $G_-$. 
\item Let $R$ be the subgraph of $H$ consisting of the $k$ vertices corresponding to the elements $g_1, g_2, \dots, g_k$ associated with the subsets $S_1, \dots, S_k$. 
\item Let $Q$ be the subgraph of $H$ consisting of the rest of the vertices of $H$.
\end{enumerate}
\end{definition}

\begin{lem}\label{lem4.6}
The subgraphs $P$ and $R$ are vertex disjoint for $k \leq 50$ and $|S| \leq 626$.
\end{lem}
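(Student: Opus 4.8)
The plan is to show that none of the elements $g_1,\dots,g_k$ defining $R$ can lie in $G_-$; since the vertices of $P$ are exactly the elements of $G_-$, this immediately gives that $P$ and $R$ share no vertices. Concretely, I would prove that $\delta(g_j) > 0$ for every $j \le k$, so that each $g_j$ lies in $G_+$ and hence outside $G_-$.

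The key observation is that the recursive construction forces $g_jS$ to sit entirely inside $S_1 \cup \cdots \cup S_k$. Indeed, for $j \ge 2$ the definition $S_j = g_jS \setminus [g_jS \cap (S_1 \cup \cdots \cup S_{j-1})]$ gives the disjoint decomposition
\[
g_jS = S_j \,\cup\, \bigl(g_jS \cap (S_1 \cup \cdots \cup S_{j-1})\bigr),
\]
in which both pieces lie in $S_1 \cup \cdots \cup S_k$ (the first because $j \le k$, the second because it is contained in $S_1 \cup \cdots \cup S_{j-1}$); for $j=1$ we have $g_1 = e$ and $g_1S = S = S_1$. Hence $g_jS \subseteq S_1 \cup \cdots \cup S_k$, so that $|g_jS \cap (S_1 \cup \cdots \cup S_k)| = |g_jS| = |S|$ because the automorphisms in $\textrm{Co}_0$ are bijective. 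This yields $\delta(g_j) = |S| - E_k$ for each $j \le k$.

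It then remains to check that $E_k < |S|$. By the same linearity-of-expectation computation underlying Proposition~\ref{prop1.4}, one has $E_k = |S|\,\bigl(\sum_{i=1}^{k}|S_i|\bigr)/196560$, so $E_k < |S|$ is equivalent to $\sum_{i=1}^k |S_i| < 196560$. This is exactly where the hypotheses enter: since each $|S_i| \le |S|$ and there are $k$ terms, $\sum_{i=1}^k |S_i| \le k|S| \le 50 \cdot 626 = 31300 < 196560$. Therefore $\delta(g_j) = |S| - E_k > 0$, so $g_j \in G_+$ and in particular $g_j \notin G_-$ for every $j \le k$, giving $P \cap R = \emptyset$.

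The argument is short once the containment $g_jS \subseteq S_1 \cup \cdots \cup S_k$ is spotted; the one genuine step of substance is recognizing that the deletion step in the construction pins $g_jS$ inside the running union, turning the overlap count into the maximal value $|S|$ rather than something close to the average $E_k$. The bounds $k \le 50$ and $|S| \le 626$ play only the modest role of keeping $\sum_i |S_i|$ safely below $|\mathcal{C}| = 196560$, so that $E_k < |S|$ holds strictly.
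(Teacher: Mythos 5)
Your proof is correct, but it takes a genuinely different route from the paper's. The paper uses only the one-sided containment $S_i \subseteq g_iS$, which gives $|g_iS \cap (S_1 \cup \cdots \cup S_k)| \geq |S_i|$, and then must work to show $|S_i| > E_k$: it invokes the lower bound of Proposition~\ref{prop1.4} to get $|S_i| \geq |S|\left(1 - 50|S|/|\mathcal{C}|\right)$ and uses $|S| \leq 626 < |\mathcal{C}|/100$ to conclude that this exceeds $E_k \leq 50|S|^2/|\mathcal{C}|$. You instead exploit the opposite containment $g_jS \subseteq S_1 \cup \cdots \cup S_k$, immediate from the form $S_j = g_jS\setminus [g_jS \cap (S_1\cup\cdots\cup S_{j-1})]$ fixed at the start of Section~\ref{math}, which pins the overlap at its maximal value $|S|$ exactly; you then only need $E_k < |S|$, i.e.\ $\sum_{j=1}^k |S_j| < |\mathcal{C}|$, which holds under the far weaker condition $k|S| < |\mathcal{C}|$. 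Your argument is thus simpler, quantitatively stronger ($\delta(g_j) = |S| - E_k$ rather than merely $\delta(g_j) > 0$), and shows the numerical hypotheses are barely needed. What the paper's estimate buys is robustness: it never uses that $S_j$ exhausts all of $g_jS$ outside the earlier sets, so it survives if the $S_j$'s are later shrunk --- which is exactly what happens in the proof of Corollary~\ref{cor}, where elements are deleted from $S_{k+1}$ and $S_{k+2}$ to achieve prescribed sizes before the construction is iterated. After such trimming your equality $|g_jS \cap (S_1 \cup \cdots \cup S_k)| = |S|$ can fail, while the paper's inequality $\geq |S_j|$, combined with the fact that the trimmed sizes still meet the Proposition~\ref{prop1.4} bounds, goes through unchanged; so for the sets actually produced by Algorithm~\ref{alg4.13} one would fall back on the paper's one-sided argument.
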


\begin{proof}
Recall that each vertex in $R$ corresponds to an element $g_i \in \textrm{Co}_0$, which is associated with subset $S_i$, where $i \in \{1,2,\dots, k\}$. Thus $g_iS \cap S_i = S_i$, so $|g_iS \cap (S_1 \cup \cdots \cup S_k)| \geq |S_i|$. Note that the size of each $S_i$ is greater than or equal to the size of the corresponding subset generated by applying Proposition~\ref{prop1.4}, as mentioned at the beginning of this section. Therefore, for $i \leq k \leq 50$, we have
\begin{equation*}
\begin{split}
|S_i| & \geq |S|\left( 1 - \frac{\sum_{j=1}^{i-1}{|S_j|}}{|\mathcal{C}|} \right) \\
& > |S|\left( 1 - \frac{\sum_{j=1}^{k}{|S_j|}}{|\mathcal{C}|} \right) \\
& \geq |S|\left( 1 - \frac{50|S|}{|\mathcal{C}|} \right). \\
\end{split}
\end{equation*}
Since $|S| \leq 626 < \frac{|\mathcal{C}|}{100} = 1965.6$, this is greater than
\[|S|\frac{50|S|}{|\mathcal{C}|} > |S|\frac{\sum_{j=1}^{k}{|S_j|}}{|\mathcal{C}|} = E_k. \]

Therefore $g_i$ is in $G_+$ for $i = 1,2, \dots, k$. But each vertex in $P$ corresponds to an element in $G_-$. Since $G_+$ and $G_-$ are disjoint by definition, it follows that $P$ and $R$ are disjoint.
\end{proof} 

Before we carry on analyzing the properties of $H$, we make use of the following general graph theory result:

\begin{lem}\label{lem4.7}
Let $\Omega$ be a w-regular graph. Let $A$ and $B$ be disjoint induced subgraphs of $\Omega$ such that $V(A) \cup V(B) = V(\Omega)$ and $|V(A)| \geq |V(B)|$. Then if $B$ contains an edge, $A$ must also contain an edge.
\end{lem}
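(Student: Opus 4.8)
The plan is to prove the contrapositive: assuming that $A$ contains no edge (that is, $V(A)$ is an independent set in $\Omega$), I will show that $B$ also contains no edge. The entire argument is a double-counting of edges driven by the regularity of $\Omega$, so no structural information about $\Omega$ beyond its common degree $w$ is needed.

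First I would fix notation: write $a = |V(A)|$ and $b = |V(B)|$, so that $a \geq b$ by hypothesis and $a + b = |V(\Omega)|$ since $A$ and $B$ partition the vertex set. Let $e_B$ denote the number of edges of $\Omega$ with both endpoints in $V(B)$, and let $e_{AB}$ denote the number of edges with exactly one endpoint in each of $V(A)$ and $V(B)$. Because $A$ is assumed to be independent, every edge incident to a vertex of $A$ must cross into $B$; this is the key structural observation of the proof.

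Next I would apply the handshake identity to each side separately. Summing the degrees over $V(A)$ yields $aw$, and since $A$ has no internal edges this sum counts precisely the crossing edges, so $e_{AB} = aw$. Summing the degrees over $V(B)$ yields $bw$, which counts each internal $B$-edge twice and each crossing edge once, giving $bw = 2e_B + e_{AB} = 2e_B + aw$.

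Finally I would combine these two relations to obtain $2e_B = (b - a)w \leq 0$, using $a \geq b$ and $w \geq 0$; hence $e_B = 0$, so $B$ is edgeless, which establishes the contrapositive and therefore the lemma. The only point requiring care is the step identifying $e_{AB}$ with $aw$: it relies entirely on the independence of $A$ forcing all $aw$ half-edges emanating from $V(A)$ to terminate in $V(B)$. Once this is combined with regularity and the size hypothesis $a \geq b$, the desired inequality follows immediately, so I expect no genuine obstacle beyond making this counting precise.
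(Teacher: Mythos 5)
Your proof is correct and uses essentially the same argument as the paper: a double count of the edges crossing between $A$ and $B$ via regularity and the handshake identity. The only cosmetic difference is that you argue the contrapositive with an exact edge count ($bw = 2e_B + aw$, hence $e_B = 0$), whereas the paper argues by contradiction, bounding $|E(A,B)| \leq w|V(B)| - 2$ when $B$ has an edge.
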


\begin{proof}
Suppose, for contradiction, that there is at least one edge in $B$ but no edges in $A$. 

Let $E(A,B)$ denote the set of edges between $A$ and $B$. We will calculate the size of $E(A,B)$ in two different ways.
First, consider the sum of the degrees of the vertices in $A$. Since $\Omega$ is a $w$-regular graph, each vertex in $A$ has degree $w$, so since there are no edges in $A$, $|E(A,B)| = w|V(A)|$. 
Now consider the sum of the degrees of the vertices in $B$. Each vertex in $B$ also has degree $w$, but there is at least one edge in $B$, so $|E(A,B)| \leq (|V(B)| - 2)w + 2(w-1) = w|V(B)| - 2$. 

Therefore, 
\[w\lvert V(A)\rvert \leq w|V(B)| - 2.\] 
But $|V(A)| \geq |V(B)|$, so 
\[w|V(A)| \leq w|V(B)| - 2 \leq w|V(A)| - 2.\]
Thus we have a contradiction.
\end{proof}

We now apply Lemma~\ref{lem4.7} to our situation:

\begin{corollary}\label{cor4.8}
Suppose that $2 \leq k \leq 50$, and $|S| \leq 626$. If $|G_-| \geq |G_+|$, then there exist distinct $g_1, g_2 \in G_-$ such that $g_1S \cap g_2S = \emptyset$.
\end{corollary}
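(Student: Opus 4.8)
The plan is to apply Lemma~\ref{lem4.7} to the graph $H$ with the subgraph decomposition set up in Definition~\ref{defn4.5}. The natural choice is to take $B = P$ (the vertices corresponding to $G_-$) and $A$ to be everything else, namely the subgraph induced by $G_+$ together with any vertices of $H$ not already in $P$. First I would check the hypotheses of Lemma~\ref{lem4.7}: the graph $H$ is $w$-regular by Lemma~\ref{lem4.4}, and $A$ and $B$ partition the vertex set of $H$ by construction. The size condition $|V(A)| \geq |V(B)|$ is exactly the assumption $|G_-| \geq |G_+|$ once I confirm that $V(B) = G_-$ and $V(A) \supseteq G_+$, with all remaining vertices (those $g$ with $\delta(g) = 0$) thrown into $A$; since $|V(A)| = |V(H)| - |G_-| = |G_+| + |\{\delta = 0\}| \geq |G_+|$ follows from $|G_-| \le |G_+|$... wait, that is backwards, so the placement of the $\delta = 0$ vertices matters.

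The key subtlety is orienting the inequality correctly. Lemma~\ref{lem4.7} concludes that the \emph{larger} side contains an edge provided the smaller side does. Since we are given $|G_-| \geq |G_+|$ and want to produce an edge inside $G_-$, I would set $A = P$ (so $V(A) = G_-$, the side we want an edge in) and let $B$ be the induced subgraph on all other vertices. To invoke the lemma I then need $|V(A)| \geq |V(B)|$, i.e. $|G_-| \geq |G_+| + |\{g : \delta(g) = 0\}|$, together with the fact that $B$ contains an edge. This is where Lemma~\ref{lem4.6} enters: for $k \le 50$ and $|S| \le 626$, the vertices $g_1, \dots, g_k$ of $R$ lie in $G_+ \subseteq V(B)$, and I must exhibit an edge of $H$ among them. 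An edge between $g_i$ and $g_j$ means $g_iS \cap g_jS = \emptyset$, which holds precisely when the disjoint subsets $S_i$ and $S_j$ come from disjoint translates; since the $S_j$ were constructed to be mutually disjoint, at least one such pair of translates should be genuinely disjoint (not merely disjoint after deletion), giving the required edge in $R \subseteq B$.

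The main obstacle I anticipate is the bookkeeping of the $\delta(g) = 0$ vertices and making the size comparison come out in the right direction. The cleanest route is probably to merge the $\delta = 0$ vertices into the side $B$ (the complement of $P$) so that $V(A) = G_-$ exactly; then $|V(A)| = |G_-| \geq |G_+| = |V(B) \setminus \{\delta = 0\}|$ does \emph{not} immediately give $|V(A)| \geq |V(B)|$, forcing me to argue that the $\delta = 0$ set is small or empty, or else to route those vertices into $A$ instead and re-examine which side is larger. Either way I expect to need a short argument pinning down $|V(A)|$ versus $|V(B)|$ precisely, possibly using that the antipodality forces $|gS \cap (S_1 \cup \cdots \cup S_k)|$ to be even while $E_k$ need not be, so $\delta(g) = 0$ can only occur when $E_k$ is itself an even integer.

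Once the hypotheses are verified, the conclusion of Lemma~\ref{lem4.7} gives an edge inside $A = P$, i.e. two distinct vertices $g_1, g_2 \in G_-$ with $g_1S \cap g_2S = \emptyset$, which is exactly the claim. I would close by noting that this edge inside $P$ is what will be exploited in the subsequent recursive bound: the disjointness $g_1 S \cap g_2 S = \emptyset$ lets one combine two below-average translates to carve out a larger $S_{k+1}$ than the single-translate estimate of Proposition~\ref{prop1.4} allows.
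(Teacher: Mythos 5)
Your overall strategy is the same as the paper's: apply Lemma~\ref{lem4.7} to the regular graph $H$ (Lemma~\ref{lem4.4}) with $A = P$ and $B$ the induced subgraph on the remaining vertices, use Lemma~\ref{lem4.6} to keep $R$ out of $P$, and extract the desired edge inside $G_-$. However, there is a genuine gap at the step the whole corollary hinges on: exhibiting an edge in $B$. You assert that, because the sets $S_1, \dots, S_k$ are mutually disjoint, ``at least one such pair of translates should be genuinely disjoint (not merely disjoint after deletion).'' This is a non sequitur: the construction $S_j = g_j S \setminus [g_j S \cap (S_1 \cup \cdots \cup S_{j-1})]$ makes the $S_j$ pairwise disjoint \emph{by fiat}, whatever the translates $g_i S$ do; all of the translates could pairwise intersect, with every overlap simply deleted. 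Mutual disjointness of the $S_j$ alone gives you nothing about disjointness of the $g_i S$. The paper closes this gap with a quantitative argument, and it is exactly here that the hypothesis $|S| \leq 626$ does its second job (its first being Lemma~\ref{lem4.6}): by Proposition~\ref{prop1.4} one may choose $g_2$ with
\[
|S_2| \;\geq\; |S|\left(1 - \frac{|S|}{|\mathcal{C}|}\right) \;>\; |S| - 2,
\]
since $|S|^2/|\mathcal{C}| \leq 626^2/196560 < 2$. Antipodality forces both $|S|$ and $|S_2|$ to be even, so $|S_2| = |S|$, i.e.\ $g_2 S$ meets $S_1 = g_1 S$ in the empty set. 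Thus $g_1$ and $g_2$ are adjacent in $H$, and since $k \geq 2$ both lie in $R \subseteq R \cup Q = B$. Without this (or some substitute) argument, Lemma~\ref{lem4.7} cannot be invoked and the proof does not go through.

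On your other worry --- the bookkeeping of vertices with $\delta(g) = 0$ --- your instinct is sound and in fact more careful than the paper, which silently passes from $|G_-| \geq |G_+|$ to $|V(P)| \geq |V(R \cup Q)|$ even though $V(R \cup Q)$ also contains any $g$ with $\delta(g) = 0$. Your proposed resolution is the right one: antipodality makes $|gS \cap (S_1 \cup \cdots \cup S_k)|$ even, so $\delta(g) = 0$ can only occur when $E_k$ is an even integer, and in the setting where the corollary is actually used (case (1) of Lemma~\ref{lem4.2}, where $\max_{g \in G_-} |\delta(g)| = E_k - \lfloor E_k \rfloor_2 > 0$) $E_k$ cannot be an even integer, so no such vertices exist and $\textrm{Co}_0 = G_+ \cup G_-$. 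But this side issue is secondary; the missing edge-in-$B$ argument is the substantive defect in your proposal.
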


\begin{proof}
Note that by Definition~\ref{defn4.5}, $|V(P)| = |G_-|$.

Also by Definition~\ref{defn4.5}, $V(R) \cup V(P) \cup V(Q) = V(H)$. By Lemma~\ref{lem4.6}, we know that $P$ and $R$ are vertex disjoint, which together with the definition of $Q$, implies that $P$ is vertex disjoint from $R \cup Q$.  

By Proposition~\ref{prop1.4}, we know that we can construct $|S_2| \geq |S|(1-\frac{|S|}{|\mathcal{C}|})$. Thus, by the antipodality of $S_2$, we have $|S_2| = |S|$ for $|S| \leq 626$. Therefore, since $S_1$ and $S_2$ are disjoint and are both of size $|S|$, the vertices corresponding to $g_1$ and $g_2$ are adjacent. Since $g_1$ and $g_2$ are both in $R$ (as $k \geq 2$), they are also in $R \cup Q$. Thus there is an edge in $R \cup Q$.

Additionally, as proved in Lemma~\ref{lem4.4}, $H$ is a regular graph.

Then since $|G_-| \geq |G_+|$, we have $|V(P)| \geq |V(R \cup Q)|$. Thus by Lemma~\ref{lem4.7}, there must be an edge in $P$.

We let the vertices connected by this edge be $g_1$ and $g_2$, completing the proof.
\end{proof}

We remark that Lemma~\ref{lem4.6} and therefore Corollary~\ref{cor4.8} hold for values of $k$ much larger than 50; we state the condition $k \leq 50$ because, from Table~\ref{cohnsums}, we only need $k \leq 50$ to construct the kissing number lower bounds in dimensions 25 to 31.
\medskip
\\We now apply Corollary~\ref{cor4.8} to describe the implications of Lemma~\ref{lem4.2} part (1) in the next two lemmas:

\begin{lem}\label{lem4.9}
Suppose $\max\limits_{g \in G_-}{|\delta(g)|} = E_k - \floor{E_k}_2$. Then
\[|G_-| \geq \frac{|\textrm{Co}_0|}{2}(\ceil{E_k}_2 - E_k).\]
\end{lem}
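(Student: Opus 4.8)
The plan is to turn the hypothesis into a rigid description of $G_-$ and then run a balancing argument against the mean-zero identity recorded just before Lemma~\ref{lem4.2}, namely $\sum_{g \in G_+}|\delta(g)| = \sum_{g \in G_-}|\delta(g)|$.

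First I would write $m = \floor{E_k}_2$. Because $S_1, \dots, S_k$ are antipodal, each intersection size $|gS \cap (S_1 \cup \cdots \cup S_k)|$ is an even integer (as in Lemma~\ref{lem4.2}), so for $g \in G_-$ this size is an even integer strictly below $E_k$, hence at most $m$; correspondingly $|\delta(g)| = E_k - |gS \cap (S_1 \cup \cdots \cup S_k)| \geq E_k - m$, with equality exactly when the size equals $m$. Thus $E_k - m = E_k - \floor{E_k}_2$ is the \emph{minimum} of $|\delta|$ over $G_-$, and the hypothesis asserts it is also the maximum. Since $G_- \neq \emptyset$ by Lemma~\ref{lem4.2}, this forces every $g \in G_-$ to satisfy $|gS \cap (S_1 \cup \cdots \cup S_k)| = m$ exactly, and in particular $E_k$ cannot itself be even (otherwise $|\delta(g)| = 0$ would contradict $\delta(g) < 0$). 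Consequently no $g$ has $\delta(g) = 0$, so $\textrm{Co}_0 = G_+ \sqcup G_-$ and $\ceil{E_k}_2 = m + 2$.

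Next I would exploit $\sum_{g \in G_+}|\delta(g)| = \sum_{g \in G_-}|\delta(g)|$. By the previous step every $g \in G_-$ contributes exactly $E_k - \floor{E_k}_2$, so the right-hand side equals $|G_-|(E_k - \floor{E_k}_2)$. On the left, every $g \in G_+$ has $|gS \cap (S_1 \cup \cdots \cup S_k)| \geq m + 2 = \ceil{E_k}_2$, hence $|\delta(g)| \geq \ceil{E_k}_2 - E_k$, giving $\sum_{g \in G_+}|\delta(g)| \geq |G_+|(\ceil{E_k}_2 - E_k)$. Writing $a = E_k - \floor{E_k}_2$ and $b = \ceil{E_k}_2 - E_k$ (so that $a + b = 2$), these combine to $|G_-|\,a \geq |G_+|\,b = (|\textrm{Co}_0| - |G_-|)\,b$; solving yields $|G_-|(a+b) \geq |\textrm{Co}_0|\,b$, i.e. $2|G_-| \geq |\textrm{Co}_0|\,b$, which is exactly the claimed bound $|G_-| \geq \frac{|\textrm{Co}_0|}{2}(\ceil{E_k}_2 - E_k)$.

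The only real obstacle is the first step: correctly reading off from the hypothesis that $G_-$ is forced to sit entirely at the single value $\floor{E_k}_2$, rather than merely being bounded, together with the parity bookkeeping that rules out $\delta(g) = 0$ and pins $\ceil{E_k}_2 = \floor{E_k}_2 + 2$. Once $G_-$ is identified in this way, the remainder is a one-line balancing of the positive and negative deviations, using only the mean-zero identity and $a + b = 2$.
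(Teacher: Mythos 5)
Your proof is correct and follows essentially the same route as the paper's: it pins every element of $G_-$ at the exact deviation $E_k - \floor{E_k}_2$, invokes the balance identity $\sum_{g \in G_+}|\delta(g)| = \sum_{g \in G_-}|\delta(g)|$ together with the bound $|\delta(g)| \geq \ceil{E_k}_2 - E_k$ on $G_+$, and uses the parity observation that $E_k$ is not an even integer, so $\textrm{Co}_0 = G_+ \sqcup G_-$ and $\ceil{E_k}_2 - \floor{E_k}_2 = 2$. The only differences are presentational: you spell out the min-equals-max argument and the identity $a + b = 2$, which the paper leaves implicit in its final simplification.
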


\begin{proof}
Since $\max\limits_{g \in G_-}{|\delta(g)|} = E_k - \floor{E_k}_2$, we have
\[|\delta(g)| = E_k - \floor{E_k}_2\]
for all $g \in G_-$.
We also have
\[\sum_{g\in G_+}{|\delta(g)|} = \sum_{g\in G_-}{|\delta(g)|} = |G_-|(E_k - \floor{E_k}_2).\]
Now, by definition, $|\delta(g)| \geq \ceil{E_k}_2 - E_k$ for $g \in G_+$. Also, since $\max\limits_{g \in G_-}{|\delta(g)|} = E_k - \floor{E_k}_2$, by the definition of $G_-$, $E_k \neq \floor{E_k}_2$. Thus $E_k$ is not an even integer. However, $|gS \cap (S_1 \cup \cdots \cup S_k)|$ is an even integer by antipodality, so $\delta(g) \neq 0$ for all $g \in \textrm{Co}_0$. Therefore, $G_+ \cup G_- = \textrm{Co}_0$. Therefore,
\begin{equation*}
\begin{split}
|G_-|(E_k - \floor{E_k}_2) & = \sum_{g \in G_+}{|\delta(g)|} \\
& \geq (|\textrm{Co}_ 0| - |G_-|)(\ceil{E_k}_2 - E_k).
\end{split}
\end{equation*}
This implies that
\[|G_-|(E_k - \floor{E_k}_2 + \ceil{E_k}_2 - E_k) \geq |\textrm{Co}_ 0|(\ceil{E_k}_2 - E_k),\]
which simplifies to
\[|G_-| \geq \frac{|\textrm{Co}_ 0|}{2}(\ceil{E_k}_2 - E_k).\]
\end{proof}

\begin{lem}\label{lem4.10}
Let $2 \leq k \leq 50$ and $|S| \leq 626$.
Suppose that $\max\limits_{g \in G_-}{|\delta(g)|} = E_k - \floor{E_k}_2$ and $|G_-| \geq |G_+|$. Then there exist $S_{k+1}, S_{k+2} \subset \mathcal{C}$ such that 
\[|S_{k+1}|,|S_{k+2}| \geq |S| - \floor{E_k}_2,\]
and $S_\ell \cap S_m = \emptyset$ for all distinct $\ell$ and $m$ satisfying $1 \leq \ell,m \leq k+2$.
\end{lem}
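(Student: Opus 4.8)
The plan is to invoke Corollary~\ref{cor4.8} to produce the two automorphisms of $\textrm{Co}_0$ that will generate $S_{k+1}$ and $S_{k+2}$. Since $2 \le k \le 50$, $|S| \le 626$, and we are assuming $|G_-| \ge |G_+|$, all hypotheses of Corollary~\ref{cor4.8} are met, so there exist distinct $h_1, h_2 \in G_-$ with $h_1 S \cap h_2 S = \emptyset$. These $h_1, h_2$ will play the role of the automorphisms generating the two new subsets, and the disjointness $h_1 S \cap h_2 S = \emptyset$ is exactly the extra leverage that makes the argument work.

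Next I would pin down how much each $h_i S$ overlaps the union $S_1 \cup \cdots \cup S_k$. By the argument in the proof of Lemma~\ref{lem4.2}, every $g \in G_-$ satisfies $|\delta(g)| \ge E_k - \floor{E_k}_2$; combined with the standing hypothesis $\max_{g \in G_-}|\delta(g)| = E_k - \floor{E_k}_2$, this forces $|\delta(g)| = E_k - \floor{E_k}_2$ for all $g \in G_-$. Equivalently, $|gS \cap (S_1 \cup \cdots \cup S_k)| = \floor{E_k}_2$ for every $g \in G_-$, and in particular for $h_1$ and $h_2$.

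I would then set $S_{k+1} = h_1 S \setminus [h_1 S \cap (S_1 \cup \cdots \cup S_k)]$ and $S_{k+2} = h_2 S \setminus [h_2 S \cap (S_1 \cup \cdots \cup S_k \cup S_{k+1})]$, mirroring the recursive construction. By the previous paragraph $|S_{k+1}| = |S| - \floor{E_k}_2$, which already meets the required bound. The essential point for $S_{k+2}$ is that $S_{k+1} \subseteq h_1 S$ while $h_1 S \cap h_2 S = \emptyset$, so $h_2 S \cap S_{k+1} = \emptyset$; hence $h_2 S$ acquires no extra overlap from $S_{k+1}$, giving $|h_2 S \cap (S_1 \cup \cdots \cup S_k \cup S_{k+1})| = |h_2 S \cap (S_1 \cup \cdots \cup S_k)| = \floor{E_k}_2$ and therefore $|S_{k+2}| = |S| - \floor{E_k}_2$ as well. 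Mutual disjointness of $S_1, \ldots, S_{k+2}$ is immediate: $S_1, \ldots, S_k$ are disjoint by hypothesis, and each of $S_{k+1}, S_{k+2}$ has its overlap with all earlier sets deleted by construction.

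The one genuinely load-bearing idea is the use of Corollary~\ref{cor4.8} to obtain two elements of $G_-$ whose translates of $S$ are disjoint: this is precisely what lets the second subset avoid any additional overlap and so achieve the same bound as the first, essentially producing two subsets ``for the price of one.'' I expect no serious obstacle beyond securing these two elements; the remaining work is the bookkeeping verifying the exact overlap count $\floor{E_k}_2$ and checking disjointness, both of which become routine once the equality $|\delta(g)| = E_k - \floor{E_k}_2$ on all of $G_-$ is established.
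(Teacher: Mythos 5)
Your proposal is correct and follows essentially the same route as the paper: both use the hypothesis to force $|gS \cap (S_1 \cup \cdots \cup S_k)| = \floor{E_k}_2$ for every $g \in G_-$, invoke Corollary~\ref{cor4.8} to obtain two elements of $G_-$ with disjoint translates of $S$, and take the two new subsets to be those translates minus their overlap with $S_1 \cup \cdots \cup S_k$. The only cosmetic difference is that you also delete the overlap with $S_{k+1}$ when defining $S_{k+2}$ and then note this deletion is vacuous, whereas the paper gets the same disjointness directly from $g_{\alpha}S \cap g_{\beta}S = \emptyset$.
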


\begin{proof}
We have $|G_-|$ subsets, $gS$, such that
\[E_k - |gS \cap (S_1 \cup \cdots \cup S_k)| = E_k - \floor{E_k}_2,\]
in other words
\[|gS \setminus (gS \cap (S_1 \cup \cdots \cup S_k))| = |S| - \floor{E_k}_2.\]
Now, since $|G_-| \geq |G_+|$, at least two of the $|G_-|$ subsets are disjoint by Corollary~\ref{cor4.8}. Call them $g_{\alpha}S$ and $g_{\beta}S$. Now set
\[S_{k+1} = g_{\alpha}S \setminus (g_{\alpha}S \cap (S_1 \cup \cdots \cup S_k))\] and
\[S_{k+2} = g_{\beta}S \setminus (g_{\beta}S \cap (S_1 \cup \cdots \cup S_k)).\]
\end{proof}

We consider the implications of Lemma~\ref{lem4.2} part (2):

\begin{lem}\label{lem4.11}
Suppose that $\max\limits_{g \in G_-}{|\delta(g)|} \geq E_k - \floor{E_k}_2 + 2$. Then there exists a subset $S_{k+1} \subset \mathcal{C}$ such that
\[|S_{k+1}| \geq |S| + 2 - \floor{E_k}_2\]
and $S_\ell \cap S_m = \emptyset$ for all distinct $\ell$ and $m$ satisfying $1 \leq \ell,m \leq k+1$.
\end{lem}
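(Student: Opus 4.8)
The plan is to exploit the hypothesis directly: in this case a single automorphism in $G_-$ already has unusually small overlap with $S_1 \cup \cdots \cup S_k$, so I can take $S_{k+1}$ to be the image of $S$ under that one automorphism with the overlap deleted. This is genuinely simpler than the situation handled in Lemma~\ref{lem4.10}, where part (1) of Lemma~\ref{lem4.2} only guaranteed that \emph{many} images of $S$ each fall short of $E_k$ by the minimal possible amount, forcing us to invoke Corollary~\ref{cor4.8} (and hence the regularity of $H$ from Lemma~\ref{lem4.4} together with Lemma~\ref{lem4.7}) to extract two disjoint images. Here no such machinery is needed, which is also why the lemma carries no hypotheses on $k$ or $|S|$.

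Concretely, I would first choose $g^\ast \in G_-$ realizing $\max_{g \in G_-}|\delta(g)|$; this is legitimate because $G_-$ is a nonempty finite set, as established in the proof of Lemma~\ref{lem4.2}. Since $g^\ast \in G_-$ we have $\delta(g^\ast) < 0$, so by Definition~\ref{defn4.1} the value $|\delta(g^\ast)| = E_k - |g^\ast S \cap (S_1 \cup \cdots \cup S_k)|$. Feeding this into the hypothesis $\max_{g \in G_-}|\delta(g)| \geq E_k - \floor{E_k}_2 + 2$ and rearranging gives
\[
|g^\ast S \cap (S_1 \cup \cdots \cup S_k)| \leq \floor{E_k}_2 - 2.
\]
I would then define $S_{k+1} = g^\ast S \setminus [\,g^\ast S \cap (S_1 \cup \cdots \cup S_k)\,]$, matching the prescribed form of the $S_j$'s. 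Because $g^\ast$ is a bijection we have $|g^\ast S| = |S|$, so
\[
|S_{k+1}| = |S| - |g^\ast S \cap (S_1 \cup \cdots \cup S_k)| \geq |S| - (\floor{E_k}_2 - 2) = |S| + 2 - \floor{E_k}_2,
\]
which is exactly the claimed bound. Disjointness of $S_{k+1}$ from each earlier $S_\ell$ is immediate from the construction (the overlap has been removed), while pairwise disjointness of $S_1, \ldots, S_k$ is the inductive hypothesis.

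There is essentially no substantive obstacle in this case; the only points needing care are bookkeeping. By the antipodality of $S$ and of the $S_j$'s, the quantity $|g^\ast S \cap (S_1 \cup \cdots \cup S_k)|$ is an even integer (the same parity observation used in Lemma~\ref{lem4.2}), so the bound $\floor{E_k}_2 - 2$ is consistent and $S_{k+1}$ is itself antipodal, as required for the recursion to continue. The real difficulty of the whole argument lies upstream, in the dichotomy of Lemma~\ref{lem4.2} and in the graph-theoretic extraction of Corollary~\ref{cor4.8}; once we are in part (2), the desired $S_{k+1}$ is handed to us by the extremal element $g^\ast$.
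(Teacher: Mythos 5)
Your proof is correct and takes essentially the same approach as the paper: pick the extremal $g \in G_-$, use the hypothesis to bound the overlap $|gS \cap (S_1 \cup \cdots \cup S_k)|$ by $\floor{E_k}_2 - 2$, and take $S_{k+1}$ to be $gS$ with that overlap deleted. Your write-up is in fact slightly more careful than the paper's, whose final line contains a typo setting $S_{k+1} = gS \cap (S_1 \cup \cdots \cup S_k)$ rather than the intended set difference $gS \setminus [\,gS \cap (S_1 \cup \cdots \cup S_k)\,]$, which is what you correctly wrote.
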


\begin{proof}
Suppose that $\max\limits_{g \in G_-}{|\delta(g)|} \geq E_k - \floor{E_k}_2 + 2$. Then there exists an element $g$ in $G_-$ such that 
\begin{align*}
E_k - |gS \cap (S_1 \cup \cdots \cup S_k)|  &\geq E_k - \floor{E_k}_2 + 2 \\
|gS \cap (S_1 \cup \cdots \cup S_k)| &\leq \floor{E_k}_2 - 2 \\
|gS \setminus (gS \cap (S_1 \cup \cdots \cup S_k))| &\geq |S| + 2 - \floor{E_k}_2. 
\end{align*}
Now set
\[S_{k+1} = gS \cap (S_1 \cup \cdots \cup S_k).\]
\end{proof}

Having analyzed both possibilities presented in Lemma~\ref{lem4.2}, we now derive in Lemma~\ref{lem4.12} and Corollary~\ref{cor}, a set of consequences on $|S_{k+1}|$, $|S_{k+2}|$ and $E_{k+2}$ under the assumption that $\floor{E_k}$ is even. These results will be crucial in constructing an algorithm to derive our new kissing number lower bounds.

\begin{lem}\label{lem4.12}
Let $2 \leq k \leq 50$ and $|S| \leq 626$.
If $\floor{E_k}$ is even, then either
\hspace{1mm}
\begin{enumerate}
\item There exist $S_{k+1}, S_{k+2} \subset \mathcal{C}$ such that $|S_{k+1}|,|S_{k+2}| \geq |S| - \floor{E_k}_2$ or
\item There exist $S_{k+1}, S_{k+2} \subset \mathcal{C}$ such that $|S_{k+1}| \geq |S| - \floor{E_k}_2 + 2$ and $|S_{k+2}| \geq |S| - \floor{E_k}_2 - 2$.
\end{enumerate}
\end{lem}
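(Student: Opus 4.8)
The plan is to split the argument along the two cases furnished by Lemma~\ref{lem4.2}, showing that its case (1) produces conclusion (1) of this lemma and its case (2) produces conclusion (2). The parity hypothesis that $\floor{E_k}$ is even is the hinge that makes the first case work, and I expect that case to be the main obstacle; throughout, disjointness of the constructed sets is inherited from the lemmas being invoked.

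First I would treat the case $\max_{g \in G_-}{|\delta(g)|} = E_k - \floor{E_k}_2$. As in the proof of Lemma~\ref{lem4.9}, this forces $E_k$ to not be an even integer (otherwise $G_-=\emptyset$, contradicting $e\in G_+$), and combined with $\floor{E_k}$ being even it forces $E_k$ to be non-integral, lying strictly between $\floor{E_k}_2=\floor{E_k}$ and $\floor{E_k}+1$. Hence $\ceil{E_k}_2 = \floor{E_k}_2 + 2$ and $\ceil{E_k}_2 - E_k > 1$. Then Lemma~\ref{lem4.9} gives $|G_-| \geq \frac{|\textrm{Co}_0|}{2}(\ceil{E_k}_2 - E_k) > \frac{|\textrm{Co}_0|}{2}$; since $E_k$ is not even, no $\delta(g)$ vanishes, so $G_+\cup G_-=\textrm{Co}_0$ and $|G_+| = |\textrm{Co}_0| - |G_-| < |G_-|$. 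This supplies the hypothesis $|G_-| \geq |G_+|$ required by Lemma~\ref{lem4.10}, whose conclusion is exactly conclusion (1).

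Next I would treat the case $\max_{g \in G_-}{|\delta(g)|} \geq E_k - \floor{E_k}_2 + 2$. Here Lemma~\ref{lem4.11} immediately yields an $S_{k+1}$, disjoint from $S_1,\dots,S_k$, with $|S_{k+1}| \geq |S| - \floor{E_k}_2 + 2$, the first bound of conclusion (2). To produce $S_{k+2}$ I would fall back on the averaging argument behind Proposition~\ref{prop1.4}, now applied to the enlarged disjoint family $S_1,\dots,S_{k+1}$: writing $E_{k+1} = \mathbb{E}_{g \in \textrm{Co}_0}[|gS \cap (S_1\cup\cdots\cup S_{k+1})|] = |S|\sum_{j=1}^{k+1}|S_j|/|\mathcal{C}|$, some $g$ achieves overlap at most $E_{k+1}$, and by antipodality this overlap is even and hence at most $\floor{E_{k+1}}_2$; setting $S_{k+2} = gS \setminus (gS \cap (S_1\cup\cdots\cup S_{k+1}))$ gives $|S_{k+2}| \geq |S| - \floor{E_{k+1}}_2$.

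Finally I would compare $\floor{E_{k+1}}_2$ with $\floor{E_k}_2$. Since $E_{k+1} - E_k = |S|\,|S_{k+1}|/|\mathcal{C}| \leq 626^2/196560 < 2$ and $E_k < \floor{E_k}_2 + 2$, we cannot have $\floor{E_{k+1}}_2 \geq \floor{E_k}_2 + 4$, so $\floor{E_{k+1}}_2 \leq \floor{E_k}_2 + 2$ and therefore $|S_{k+2}| \geq |S| - \floor{E_k}_2 - 2$, the second bound of conclusion (2). The hard part is the first case: Lemma~\ref{lem4.10} only applies once $|G_-| \geq |G_+|$ is known, and verifying this is precisely where the assumption that $\floor{E_k}$ is even is essential, through the strict inequality $\ceil{E_k}_2 - E_k > 1$ fed into Lemma~\ref{lem4.9}. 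The bound $|S| \leq 626$ is what keeps $E_{k+1}-E_k$ below $2$ in the second case.
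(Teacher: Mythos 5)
Your proposal is correct and follows essentially the same route as the paper: split on the two cases of Lemma~\ref{lem4.2}; in the first case use the evenness of $\floor{E_k}$ together with Lemma~\ref{lem4.9} to get $|G_-| \geq |\textrm{Co}_0|/2 \geq |G_+|$ and invoke Lemma~\ref{lem4.10}; in the second case get $S_{k+1}$ from Lemma~\ref{lem4.11} and build $S_{k+2}$ from the Proposition~\ref{prop1.4} averaging argument applied to $S_1,\dots,S_{k+1}$, using $|S|\,|S_{k+1}|/|\mathcal{C}| \leq 626^2/196560 < 2$ and antipodality to round to $|S| - \floor{E_k}_2 - 2$. Your phrasing of the last step via $\floor{E_{k+1}}_2 \leq \floor{E_k}_2 + 2$ is only a cosmetic variation of the paper's direct computation $|S_{k+2}| \geq |S| - E_k - |S|\,|S_{k+1}|/|\mathcal{C}| \geq |S| - E_k - 2$.
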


\begin{proof}
Let $\floor{E_k}$ be even.

By Lemma~\ref{lem4.2} either
\begin{enumerate}
\item $\max\limits_{g \in G_-}{|\delta(g)|} = E_k - \floor{E_k}_2$, or
\item $\max\limits_{g \in G_-}{|\delta(g)|} \geq E_k - \floor{E_k}_2 + 2$.
\end{enumerate}

If (1) holds, then since $\floor{E_k}$ is even, $\ceil{E_k}_2 - E_k \geq 1$, so by Lemma~\ref{lem4.9}, $|G_-| \geq \frac{|\textrm{Co}_0|}{2}$. Thus by Lemma~\ref{lem4.10}, there exist subsets $S_{k+1}$ and $S_{k+2}$ of $\mathcal{C}$ such that $|S_{k+1}|, |S_{k+2}| \geq |S| - \floor{E_k}_2$.

If (2) holds, then by Lemma~\ref{lem4.11}, there exists a subset $S_{k+1}$ of $\mathcal{C}$ such that $|S_{k+1}| \geq |S| - \floor{E_k}_2 + 2$. Then, by Proposition~\ref{prop1.4}, we can find an $S_{k+2}$ such that
\begin{equation*}
\begin{split}
|S_{k+2}| & \geq |S|\left( 1 - \frac{\sum^{k+1}_{j=1}{|S_j|}}{|\mathcal{C}|}\right) \\
& = |S|\left(1 - \frac{\sum^{k}_{j=1}{|S_j|}}{|\mathcal{C}|} - \frac{|S_{k+1}|}{|\mathcal{C}|}\right) \\
& = |S| - E_k - |S|\frac{|S_{k+1}|}{|\mathcal{C}|}.
\end{split}
\end{equation*}
Since $|S_{k+1}| \leq |S|$, for $|S| \leq 626$ we have $|S|\frac{|S_{k+1}|}{|\mathcal{C}|} \leq \frac{|S|^2}{|\mathcal{C}|}$ < 2.
Therefore, $|S_{k+2}| \geq |S| - E_k -2$,
and since $|S_{k+2}|$ is an even integer by the antipodality of the $|S|$, we have $|S_{k+2}| \geq |S| - \floor{E_k}_2 - 2$, thus proving the lemma.
\end{proof}

We remark that although we do not know whether case (1) or (2) of Lemma~\ref{lem4.2} occurs (and which of cases (1) and (2) in Lemma~\ref{lem4.12}), we show in the next corollary that the two cases imply the same consequences in terms of 
$E_{k+2}$, and suitable inequalities in linear combinations of $|S_j|$'s pertaining to Table~\ref{cohnsums} for generating kissing number lower bounds. This yields Algorithm~\ref{alg4.13} for the recursive construction of 
kissing number lower bounds.

\begin{corollary}\label{cor}
Let $|S| \leq 626$ and $2 \leq k \leq 50$.
If $\floor{E_k}$ is even, then there exist $S_{k+1}, S_{k+2} \subset \mathcal{C}$ such that
\begin{equation} \label{expk2}
E_{k+2} = |S|\frac{\sum_{j=1}^k{|S_j|} + 2(|S| - \floor{E_k}_2)}{|\mathcal{C}|},
\end{equation}
\begin{equation} \label{bound1}
|S_{k+1}| + |S_{k+2}| \geq 2(|S| - \floor{E_k}_2)
\end{equation}
and 
\begin{equation} \label{bound2}
2|S_{k+1}| + |S_{k+2}| \geq 3(|S| - \floor{E_k}_2).
\end{equation}
\end{corollary}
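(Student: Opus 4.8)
The plan is to read off all three relations from Lemma~\ref{lem4.12}, handling its two cases in parallel and exploiting the fact (flagged in the remark preceding the corollary) that both cases force the same conclusions. Throughout, abbreviate $L = |S| - \floor{E_k}_2$; note $L$ is even, since $|S|$ is even by antipodality and $\floor{E_k}_2$ is even by construction. Because $\floor{E_k}$ is even and $2 \leq k \leq 50$, $|S| \leq 626$, Lemma~\ref{lem4.12} applies and yields mutually disjoint antipodal sets $S_{k+1}, S_{k+2} \subset \mathcal{C}$ in one of two cases: (i) $|S_{k+1}|, |S_{k+2}| \geq L$, or (ii) $|S_{k+1}| \geq L+2$ and $|S_{k+2}| \geq L-2$.

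First I would dispose of the two inequalities \eqref{bound1} and \eqref{bound2}, which are immediate in each case. In case (i), $|S_{k+1}| + |S_{k+2}| \geq 2L$ and $2|S_{k+1}| + |S_{k+2}| \geq 3L$. In case (ii), $|S_{k+1}| + |S_{k+2}| \geq (L+2)+(L-2) = 2L$ and $2|S_{k+1}| + |S_{k+2}| \geq 2(L+2)+(L-2) = 3L+2 \geq 3L$. Hence \eqref{bound1} and \eqref{bound2} hold regardless of which case occurs.

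The real content is the exact equality \eqref{expk2}. Its backbone is the identity
\[ E_{k+2} = \frac{|S|\sum_{j=1}^{k+2}|S_j|}{|\mathcal{C}|}, \]
which I would establish exactly as in the (omitted) proof of Proposition~\ref{prop1.4}: by linearity of expectation $E_{k+2} = \sum_{j=1}^{k+2}\sum_{x\in S}\sum_{y\in S_j}\mathbb{P}[gx=y]$, and transitivity of the $\textrm{Co}_0$-action on $\mathcal{C}$ makes every probability equal to $1/|\mathcal{C}|$; the mutual disjointness of the $S_j$ is what permits rewriting $|gS\cap(S_1\cup\cdots\cup S_{k+2})|$ as $\sum_j|gS\cap S_j|$. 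Comparing this identity with \eqref{expk2} shows that the desired equality is equivalent to the single statement $|S_{k+1}|+|S_{k+2}| = 2L$.

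The main obstacle is precisely this: Lemma~\ref{lem4.12} supplies only lower bounds on $|S_{k+1}|$ and $|S_{k+2}|$, while \eqref{expk2} demands that their sizes sum to exactly $2L$. I would bridge the gap by trimming. Any antipodal subset of $S_{k+1}$ (or $S_{k+2}$) still has all pairwise inner products at most $1$ and stays disjoint from the other sets, and deleting vectors in antipodal pairs moves the (even) cardinality down in steps of $2$; so I can force $|S_{k+1}|+|S_{k+2}| = 2L$. In case (i) this is automatic, since the construction underlying Lemma~\ref{lem4.10} makes $|gS\cap(S_1\cup\cdots\cup S_k)| = \floor{E_k}_2$ for the relevant $g\in G_-$, whence $|S_{k+1}| = |S_{k+2}| = L$ already. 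In case (ii) I would trim $S_{k+1}$ to exactly $L+2$ \emph{before} applying Proposition~\ref{prop1.4} to build $S_{k+2}$, so that the resulting estimate $|S_{k+2}| \geq L-2$ stays valid, and then trim $S_{k+2}$ to exactly $L-2$. With the sizes pinned so that $|S_{k+1}|+|S_{k+2}| = 2L$, equation \eqref{expk2} drops out of the displayed identity, and the inequalities \eqref{bound1}, \eqref{bound2} remain true for the trimmed sizes. The remaining work is pure bookkeeping: verifying that trimming preserves antipodality, the inner-product bound, and mutual disjointness.
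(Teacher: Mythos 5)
Your proposal is correct and follows essentially the same route as the paper's proof: invoke Lemma~\ref{lem4.12}, split into its two cases, delete elements (in antipodal pairs) to pin $|S_{k+1}|+|S_{k+2}|$ at exactly $2(|S|-\floor{E_k}_2)$, verify \eqref{bound1} and \eqref{bound2} case by case, and deduce \eqref{expk2} from the identity $E_{k+2} = |S|\sum_{j=1}^{k+2}|S_j|/|\mathcal{C}|$. Your write-up is in fact somewhat more careful than the paper's --- spelling out why trimming preserves antipodality, disjointness, and the inner-product condition, and making the expectation identity explicit --- but these are refinements of the same argument, not a different one.
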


\begin{proof}
By Lemma~\ref{lem4.12}, if $\floor{E_k}$ is even, then there exist $S_{k+1}, S_{k+2} \in \mathcal{C}$ such that either
\begin{enumerate}
\item $|S_{k+1}|,|S_{k+2}| \geq |S| - \floor{E_k}_2$, or 
\item $|S_{k+1}| \geq |S| - \floor{E_k}_2 + 2$ and $|S_{k+2}| \geq |S| - \floor{E_k}_2 - 2$.
\end{enumerate}

If case (1) holds, then we can delete elements from $S_{k+1}$ and $S_{k+2}$ until we have $|S_{k+1}| = |S_{k+2}| = |S| - \floor{E_k}_2$. 

If case (2) holds, then we can delete elements from $S_{k+1}$ and $S_{k+2}$ until we have $|S_{k+1}| = |S| - \floor{E_k}_2 + 2$ and $|S_{k+2}| = |S| - \floor{E_k}_2 - 2$. 

Then, clearly the first and third conditions are satisfied in both cases. The second condition is also satisfied because in both cases, we have $|S_{k+1}| + |S_{k+2}| = 2(|S| - \floor{E_k}_2)$, so 
\begin{equation*}
\begin{split}
E_{k+2} & = |S|\frac{\sum_{j=1}^{k+1}{|S_j|}}{|\mathcal{C}|} \\
& = |S|\frac{\sum_{j=1}^k{|S_j|} + 2(|S| - \floor{E_k}_2)}{|\mathcal{C}|}.
\end{split}
\end{equation*}
\end{proof}

The preceding discussion allows us to use Lemma~\ref{lem4.12} to improve on the probabilistic construction (Lemmas 7.4 and 7.5) in \cite{Cohn} as follows:

\begin{alg}\label{alg4.13}
\hspace{1mm}
\begin{enumerate}
\item Set $S_1 = S$ and bound $|S_2|$ below using Proposition~\ref{prop1.4}.
\item Assume we have 
constructed $S_1, S_2, \dots, S_k$.
\item If $\floor{E_k}$ is even, Corollary~\ref{cor} ensures the existence of $S_{k+1}$ and $S_{k+2}$ such that $|S_{k+1}|$ and $|S_{k+2}|$ satisfy the bounds given by (\ref{bound1}) and (\ref{bound2}). Then $E_{k+2}$ is also given by Corollary~\ref{cor} to be as in (\ref{expk2}). Repeat from step (2), using the bounds on $S_1, \dots, S_{k+2}$.
\item If $\floor{E_k}$ is odd, then bound $|S_{k+1}|$ below using Proposition~\ref{prop1.4} and continue from step (2), using the bounds on $S_1, \dots, S_{k+1}$. 
\item Stop when $k > 50$.
\end{enumerate}
\end{alg}

We remark that Algorithm~\ref{alg4.13} is an algorithm for the recursive determination of kissing number lower bounds but it does not give specific lower bounds for the individual $|S_j|$'s, since there are two possible cases in Lemma~\ref{lem4.12} for lower bounds on $|S_{k+1}|$ and $|S_{k+2}|$.

However, Corollary~\ref{cor} and the sums shown in Table~\ref{cohnsums} for the kissing number bounds imply that assuming the lower bounds for both $|S_{k+1}|$ and $|S_{k+2}|$ to be $|S|-\lfloor E_k \rfloor_2$ (i.e., case (1) in Lemma~\ref{lem4.12}) gives at most as good a kissing lower bound and the same $E_{k+2}$ as in case (2). Since Algorithm~\ref{alg4.13} after Step (3) depends only on $E_{k+2}$, we can, for the purpose of generating valid kissing lower bounds, assume that case (1) occurs.
Therefore, we introduce Algorithm~\ref{algnew}, assuming in Step (3) of Algorithm~\ref{alg4.13} that case (1) always occurs. Algorithm~\ref{algnew} will generate the same valid kissing lower bounds as Algorithm~\ref{alg4.13}. These lower bounds must be considered only as computational devices for computing kissing lower bounds, and $S_j$'s of these sizes do not necessarily exist. Rather, Corollary~\ref{cor} gives a lower bound for  the sums of linear combinations of adjacent disjoint subsets in the sequence, and these bounds are used to give a final lower bound from the sums given in Table~\ref{cohnsums}. The bounds computed by this new algorithm give a final lower bound which is certainly at most the lower bound given by the legitimate lower bounds on the $|S_i|$'s, which is therefore correct, despite the fact that existence of subsets of sizes given by this algorithm is not guaranteed.


\begin{alg}\label{algnew}
\hspace{1mm}
\begin{enumerate}
\item Set $S_1 = S$ and bound $|S_2|$ below using Proposition~\ref{prop1.4}.
\item Assume that we have lower bounds for $|S_1|, |S_2|, \dots, |S_k|$.
\item If $\floor{E_k}$ is even, case (1) of Lemma~\ref{lem4.12} gives $|S_{k+1}|, |S_{k+2}| \geq |S| - \floor{E_k}_2$, and Corollary~\ref{cor} gives $E_{k+2}$. Repeat from step (2), using the bounds on $S_1, \dots, S_{k+2}$.
\item If $\floor{E_k}$ is odd, then bound $|S_{k+1}|$ below using Proposition~\ref{prop1.4} and continue from step (2), using the bounds on $S_1, \dots, S_{k+1}$. 
\item Stop when $k > 50$.
\end{enumerate}
\end{alg}

\begin{thm}\label{thm4.14}
Algorithm~\ref{algnew} produces the lower bounds for kissing numbers in dimensions 28 through 31 shown in Table~\ref{lowerbounds}, using $|S| = 480$ and $|S| = 488$. Although the bounds given by Algorithm~\ref{algnew} for the $|S_i|$'s are not necessarily correct, the true bounds given by Algorithm~\ref{alg4.13} give final kissing number bounds at least as high as these, which means the results in Table~\ref{lowerbounds} are true lower bounds for the kissing numbers in $\mathbb{R}^n$ for $25 \leq n \leq 31$.
\end{thm}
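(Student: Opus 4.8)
The plan is to establish that Algorithm~\ref{algnew} is fundamentally a bookkeeping procedure: it computes, for each dimension, a closed-form numerical quantity from the recursion, and the theorem claims these quantities are genuine lower bounds for the kissing numbers. The central observation I would formalize first is the \emph{monotonicity} of the final kissing bound in the sequence of values $|S_1|, \dots, |S_{51}|$ that are fed into the sums of Table~\ref{cohnsums}. Since every entry in Table~\ref{cohnsums} is a nonnegative integer combination $\sum_i c_i |S_i|$ of the subset sizes, any componentwise underestimate of the true $|S_i|$'s produces an underestimate of the kissing number. This reduces the theorem to showing that the numbers Algorithm~\ref{algnew} outputs are, collectively, dominated by values attainable by an \emph{actual} disjoint sequence $S_1, \dots, S_{51}$ of the form required by Theorem~\ref{thm1.2}.

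The key structural step is to run Algorithm~\ref{alg4.13} and Algorithm~\ref{algnew} in lockstep and compare them step by step. Both algorithms branch on the parity of $\lfloor E_k \rfloor$; when it is odd they apply Proposition~\ref{prop1.4} identically, so the only divergence is in the even case. There, Corollary~\ref{cor} shows that the two subcases (1) and (2) of Lemma~\ref{lem4.12} produce \emph{the same} value of $E_{k+2}$ (equation~(\ref{expk2})), so the two algorithms remain synchronized in their state variable $E_k$ for all subsequent steps --- this is exactly why Algorithm~\ref{algnew} may unconditionally assume case~(1). The remaining point is to verify, using the explicit coefficients $c_i$ from Table~\ref{cohnsums}, that substituting the case-(1) values $|S_{k+1}| = |S_{k+2}| = |S| - \lfloor E_k \rfloor_2$ into any of the relevant linear combinations yields a value no larger than what case~(2) yields. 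This follows from inequalities~(\ref{bound1}) and~(\ref{bound2}): whenever a pair of consecutive subsets appears with coefficients $(1,1)$ or $(2,1)$ in a sum from Table~\ref{cohnsums}, the case-(1) assignment meets those bounds with equality and is therefore minimal among all assignments satisfying the corollary. I would check each of the seven dimensions' sums against the coefficient patterns that arise.

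Having established that Algorithm~\ref{algnew}'s numerical output is at most the kissing bound guaranteed by the genuine construction of Algorithm~\ref{alg4.13}, and that the latter produces truly disjoint $S_i \subset \mathcal{C}$ satisfying the hypotheses of Theorem~\ref{thm1.2} (via Corollary~\ref{cor}, which rests on the antipodality argument of Lemma~\ref{lem4.2} and the graph-regularity machinery of Lemmas~\ref{lem4.4}--\ref{lem4.7} feeding Corollary~\ref{cor4.8}), the validity of the table entries follows. I would then simply report the result of executing Algorithm~\ref{algnew} with $|S| = 480$ and $|S| = 488$: initialize $|S_1| = |S|$, bound $|S_2|$ by Proposition~\ref{prop1.4}, iterate the parity-branch recursion maintaining the running value $E_k$, and accumulate the sums of Table~\ref{cohnsums} for $n = 25, \dots, 31$, truncating at $k = 51$. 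This final step is a finite deterministic computation whose output is recorded in Table~\ref{lowerbounds}.

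The main obstacle I anticipate is the lockstep comparison in the even case --- specifically, making rigorous the claim that assuming case~(1) never \emph{increases} the computed kissing bound relative to case~(2), across all the distinct coefficient patterns in Table~\ref{cohnsums}. The subtlety is that an individual $|S_i|$ might appear in one dimension's sum with a coefficient pattern where case~(2)'s larger $|S_{k+1}|$ helps, while the paired $|S_{k+2}|$ is smaller; one must confirm that the \emph{aggregate} effect, given inequalities~(\ref{bound1}) and~(\ref{bound2}), always favors case~(1) as the conservative choice. Because $E_{k+2}$ is provably identical in both cases, the comparison decouples across the recursion and reduces to the two local inequalities, but verifying that the table's coefficient structure only ever pairs consecutive indices in the patterns covered by~(\ref{bound1}) and~(\ref{bound2}) is the delicate bookkeeping that the proof must not skip.
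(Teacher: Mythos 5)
Your proposal is correct and takes essentially the same route as the paper: the paper's own proof is just the tabulated execution of Algorithm~\ref{algnew} together with the assertion that it yields the same kissing bounds as Algorithm~\ref{alg4.13}, the justification being the lockstep argument you formalize (identical $E_{k+2}$ in both cases of Lemma~\ref{lem4.12}, and case (1) being the conservative choice for the sums in Table~\ref{cohnsums}), which the paper places in the remark preceding Algorithm~\ref{algnew}. The only detail left to finish in your sketch is the one you flag yourself: besides the patterns $(1,1)$ and $(2,1)$, the pairs $(S_{k+1},S_{k+2})$ can also meet coefficient patterns $(2,2)$, $(2,0)$, and $(1,0)$ in Table~\ref{cohnsums}, but since every sum there has coefficients nonincreasing in the index, shifting $+2/-2$ of weight toward the earlier subset (case (2)) can only increase the sum, so case (1) remains the minimal choice in every pattern and your argument goes through.
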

\begin{table}
\begin{tabular}{c|cc}
Dimension & Kissing number, $|S| = 480$ & Kissing number, $|S| = 488$\\
\hline
28 & 204188 & 204316\\
29 & 207930 & 208120\\
30 & 219012 & 219380\\
31 & 230880 & 231428\\
\end{tabular}
\caption{Lower bounds for kissing numbers produced by Algorithms ~\ref{alg4.13} and ~\ref{algnew}}
\label{lowerbounds}
\end{table}
\begin{proof}
The lower bounds for the $|S_i|$'s produced by Algorithm~\ref{algnew} using both $|S| = 488$ and $|S| = 480$ are shown in Table~\ref{bigtable}. We have also included the subset sizes produced by applying Proposition~\ref{prop1.4}, for comparison.

\begin{footnotesize}
\begin{table}
    \begin{tabular}{c|p{1.75cm}| p{1.75cm}| p{1.75cm}| p{1.75cm}}
\multirow{2}{1em}{$k$} & \multicolumn{4}{c}{Lower bound for $|S_k|$} \\
\cline{2-5} 
& \cite{Cohn}, $|S| = 480$ & Alg.~\ref{algnew}, $|S| = 480$ & \cite{Cohn}, $|S| = 488$ & Alg.~\ref{algnew}, $|S| = 488$\\
\hline
1 & 480 & 480& 488 & 488\\
2 & 480 & 480& 488 & 488\\
3 & 478 & 478& 486 & 486\\
4 & 478 & 478& 486 & 486\\
5 & 476 & 476& 484 & 484\\
6 & 476 & 476& 482 & 484\\
7 & 474 & 474& 482 & 482\\
8 & 472 & 472& 480 & 480\\
9 & 472 & 472& 480 & 480\\
10 & 470 & 470& 478 & 478\\
11 & 470 & 470& 476 & 478\\
12 & 468 & 468& 476 & 476\\
13 & 468 & 468& 474 & 474\\
14 & 466 & 466& 474 & 474\\
15 & 464 & 464& 472 & 472\\
16 & 464 & 464& 472 & 472\\
17 & 462 & 462& 470 & 470\\
18 & 462 & 462& 468 & 468\\
19 & 460 & 460& 468 & 468\\
20 & 460 & 460& 466 & 466\\
21 & 458 & 458& 466 & 466\\
22 & 456 & 458& 464 & 464\\
23 & 456 & 456& 462 & 464\\
24 & 454 & 454& 462 & 462\\
25 & 454 & 454& 460 & 460\\
26 & 452 & 452& 460 & 460\\
27 & 452 & 452& 458 & 458\\
28 & 450 & 450& 458 & 458\\
29 & 450 & 450& 456 & 456\\
30 & 448 & 448& 454 & 456\\
31 & 446 & 448& 454 & 454\\
32 & 446 & 446& 452 & 452\\
33 & 444 & 444& 452 & 452\\
34 & 444 & 444& 450 & 450\\
35 & 442 & 442& 450 & 450\\
36 & 442 & 442& 448 & 448\\
37 & 440 & 440& 448 & 448\\
38 & 440 & 440& 446 & 446\\
39 & 438 & 438& 444 & 446\\
40 & 438 & 438& 444 & 444\\
41 & 436 & 436& 442 & 442\\
42 & 436 & 436& 442 & 442\\
43 & 434 & 434& 440 & 440\\
44 & 432 & 434& 440 & 440\\
45 & 432 & 432& 438 & 438\\
46 & 430 & 430& 438 & 438\\
47 & 430 & 430& 436 & 436\\
48 & 428 & 428& 436 & 436\\
49 & 428 & 428& 434 & 434\\
50 & 426 & 426& 432 & 432\\
51 & 426 & 426& 432 & 432\\
\end{tabular}
\caption{Lower bounds on the sizes of the hypothetical $S_i$'s generated by Algorithm~\ref{algnew}}
\label{bigtable}
\end{table}
\end{footnotesize}

Using Table~\ref{cohnsums} in Section~\ref{sect1.3}, these subsets give the kissing number lower bounds shown in Table~\ref{newbounds}. Since Algorithm~\ref{algnew} generates the same kissing number lower bounds as Algorithm~\ref{alg4.13}, this proves the theorem.
\begin{table}
    \begin{tabular}{c|p{1.75cm}| p{1.75cm}| p{1.75cm}| p{1.75cm}}
Dimension & \cite{Cohn} $|S| = 480$ & Alg.~\ref{algnew}, $|S| = 480$ & \cite{Cohn}, $|S| = 488$ & Alg.~\ref{algnew}, $|S| = 488$ \\
\hline
28 & 204188 & 204188 & 204312 & 204316\\
29 & 207930 & 207930 & 208114 & 208120\\
30 & 219008 & 219012 & 219368 & 219380\\
31 & 230872 & 230880 & 231412 & 231428\\
\end{tabular}
\caption{Kissing number lower bounds}
\vspace{-0.7cm}
\label{newbounds}
\end{table}
\end{proof}

We remark that by the nature of the argument in Lemmas 7.4 and 7.5 of \cite{Cohn}, the size of the $S_i$'s decays as $i$ increases, with the rate of decay being larger for larger values of $|S|$. Therefore, Algorithm~\ref{alg4.13} and Algorithm~\ref{algnew} yield larger improvements for larger values of $|S|$. This is demonstrated by the effect of Algorithm~\ref{alg4.13} on the kissing number lower bounds for $|S| = 480$ and $|S| = 488$ shown in Table~\ref{newbounds}. We thus note that Algorithm~\ref{algnew} will yield even larger improvements if we find a larger $S$ in the future.

\section{Future Directions}
\subsection{The Structure of the Leech Lattice and its Automorphisms}

We suspect that an optimal $S$ should exhibit symmetries not currently in the $S$ of size 488. Also worth noting is the surprising fact that we could achieve 24 disjoint subsets of size 488---this suggests a deeper structure and raises the possibility that we can do much better than the greedy algorithm, with a $25^{\text{th}}$ or even $51^\text{st}$ disjoint copy. Even changing the greedy algorithm to a backtracking one might increase the number of disjoint subsets we can fit, although the running time might become an issue.

\subsection{Graph-theoretic approaches}
We might translate the problem to a maximum independent set problem or a maximum clique problem and approach the problem graph-theoretically. These problems are generally NP-complete, and we would still need clever techniques adapted to symmetric graphs, approximation algorithms, or other graph-theoretic heuristics to make this approach fruitful. 
\subsection{Improving current computational methods}
The algorithms we used could be optimized in a number of ways. New search heuristics may result in a larger $S$, as may a simply longer simulated annealing with a slower temperature decrease using more computational power and time. The explicit construction of $S_i$ can almost certainly be improved. Given that we used a greedy algorithm, we strongly doubt that the $S_i$'s are anywhere near optimal; a clever modification to how the program traverses the search tree will most likely improve the result.
\vspace{-0.3cm}
\begin{appendices}

\section{Data Files}\label{data}
The data files associated with the explicit constructions described in this paper can be found in the Git repository at \url{https://github.com/kenzkallal/Kissing-Numbers}. The files of the form \texttt{S\_i.txt} contain the lists of vectors in the subsets $S_i$. We also include the file \texttt{minvects.txt}, containing our ordering of the vectors of $\mathcal{C}$, as well as \texttt{Vbasis.txt}, which is the basis we used for $\Lambda_{24}$.
\end{appendices}
\section*{Acknowledgements}
The results in this paper originated from a research lab project in PROMYS 2015. We are deeply grateful to Henry Cohn for proposing this problem, and for his mentoring. We also thank our PROMYS research lab counselor Claudia Feng for her support, as well as Erick Knight, David Fried, Glenn Stevens, the PROMYS Foundation, and the Clay Mathematics Institute for making this research possible. 

\bibliography{citations.bib}
\bibliographystyle{amsalpha}
\end{document}